\documentclass[amscd,amssymb,verbatim]{amsart}
\usepackage{amssymb,amsfonts,amsmath,amscd}
\usepackage[all,2cell]{xy}
\UseAllTwocells

\theoremstyle{plain}
\newtheorem{Thm}{Theorem}[section]
\newtheorem{Lem}[Thm]{Lemma}
\newtheorem{Prop}[Thm]{Proposition}
\newtheorem{Cor}[Thm]{Corollary}

\theoremstyle{definition}
\newtheorem{Def}[Thm]{Definition}
\newtheorem{Ex}[Thm]{Example}

\newtheorem{Numb}[Thm]{}

\theoremstyle{remark}

\newcommand{\C}{\mathbb{C}}

\newcommand{\cG}{{\mathcal G}}
\newcommand{\cH}{{\mathcal H}}

\newcommand{\M}{\mathbb{M}}

\newcommand{\Q}{\mathbb{Q}}

\newcommand{\T}{\mathbb{T}}

\newcommand{\Z}{\mathbb{Z}}

\newcommand{\limind}{\lim_{\rightarrow}}

\newcommand{\Tor}{{\mathrm{Tor}}}

\begin{document}
\title[Lifting Commutation Relations]{Lifting Commutation Relations in Cuntz Algebras}
\author{Bruce Blackadar}
\address{Department of Mathematics/0084 \\ University of Nevada, Reno \\ Reno, NV 89557, USA}
\email{bruceb@unr.edu}

\date{\today}

\maketitle
\begin{abstract}
We examine splitting of the quotient map from the full free product $A*B$, or the unital free product
$A*_{\C}B$, to the (maximal) tensor product $A\otimes B$, for unital C*-algebras $A$ and $B$.
Such a splitting is very rare, but we show there is one if $A$ and $B$ are both the Cuntz algebra 
$O_2$ or $O_\infty$, and in a few other cases.  The splitting is not explicit (and in principle probably 
cannot be).  We also describe severe $K$-theoretic obstructions to a splitting.
\end{abstract}

\section{Introduction}

Lifting commutation relations from quotients of C*-algebras is a difficult and often unsolvable problem.
We consider the essentially generic case where $A$ and $B$ are C*-algebras (to avoid unnecessary
technicalities, we will only consider the case of unital $A$ and $B$), and we have the quotient map from
the full free product or full unital free product to the tensor product.  

Recall that the full free product $A*B$ is the universal C*-algebra generated by copies of $A$ and $B$ with no
relations (note that this free product is nonunital even if $A$ and $B$ are unital), 
the unital free product $A*_{\C}B$ is the universal C*-algebra generated by copies 
of $A$ and $B$ with
a common unit, and the tensor product $A\otimes B$ is the universal C*-algebra generated by 
commuting copies of $A$ and $B$ with a common unit (all tensor products in this paper are maximal; our examples are nuclear, so this is not much of an issue).  There is a natural quotient map $\pi$ from the
full free product $A*B$, or the unital free product $A*_{\C}B$, to the tensor product $A\otimes B$.

We consider the question of whether there is a splitting (cross section) for this quotient map, i.e.\
a *-homomorphism $\sigma:A\otimes B\to A*B$ (or to $A*_{\C}B$, not necessarily unital) with 
$\pi\circ\sigma$ the identity
on $A\otimes B$.  The short answer is ``rarely''.

Existence of a splitting for the quotient map from $A*B$ to $A\otimes B$ is equivalent to having a
functorial procedure for beginning with two (not necessarily unital) *-homomorphisms 
$\phi:A\to D$ and $\psi:B\to D$ of $A$ and $B$ into a C*-algebra $D$ and manufacturing
new *-homomorphisms $\tilde\phi:A\to D$ and $\tilde\psi:B\to D$ such that $\tilde\phi(A)$ and
$\tilde\phi(B)$ commute and have a common unit.  ``Functorial'' means:
\begin{enumerate}
\item[(i)]  If $f:D\to D'$ is a *-homomorphism, then $\widetilde{f\circ\phi}=f\circ\tilde\phi$ and
$\widetilde{f\circ\psi}=f\circ\tilde\psi$.
\item[(ii)]  If $\phi(A)$ and $\psi(B)$ already commute and have a common unit, then $\tilde\phi=\phi$
and $\tilde\psi=\psi$.
\end{enumerate}
[Such $\phi$ and $\psi$ define a *-homomorphism $\phi*\psi$ from $A*B$ to $D$; let $\tilde\phi$
and $\tilde\psi$ be defined by $(\phi*\psi)\circ\sigma$ for a splitting $\sigma$].  Such a functorial
procedure is automatically point-norm continuous in the sense that if $\phi_n:A\to D$ and
$\psi_n:B\to D$ converge point-norm to $\phi$ and $\psi$ respectively, then $\tilde\phi_n\to\tilde\phi$
and $\tilde\psi_n\to\tilde\psi$ point-norm respectively [it suffices to note that $\phi_n*\psi_n\to
\phi*\psi$ point-norm].

Existence of a splitting is also equivalent to being able to always solve the following lifting problem:
given a C*-algebra $D$, a (closed) ideal $J$ of $D$, and *-homomorphisms $\bar\phi:A\to D/J$
and $\bar\psi:B\to D/J$ such that $\bar\phi(A)$ and $\bar\psi(B)$ commute and have a common unit,
and $\bar\phi$ and $\bar\psi$ separately lift to *-homomorphisms $\phi:A\to D$ and $\psi:B\to D$,
find lifts $\tilde\phi:A\to D$ and $\tilde\psi:B\to D$ such that $\tilde\phi(A)$ and $\tilde\phi(B)$
commute and have a common unit.

\bigskip

We now give examples suggesting that splittings are ``rare''.

\begin{Ex}\label{Ex11}
Consider the simplest nontrivial C*-algebra $\C^2$.  There is an explicit description of 
$\C^2*_{\C}\C^2$, which is the universal unital C*-algebra generated by two projections, as
the continuous functions from $[0,1]$ to $\M_2$ which are diagonal at the endpoints (cf.\ \cite[IV.1.4.2]{BlackadarOperator}).
From this description it is easy to see that there is no splitting for the quotient map from 
$\C^2*_{\C}\C^2$ to $\C^2\otimes\C^2$ (which is just evaluation at the endpoints of $[0,1]$).
There is {\em a fortiori} no splitting for the quotient map from $\C^2*\C^2$ to $\C^2\otimes\C^2$,
since this quotient map factors through $\C^2*_{\C}\C^2$.

There is also no splitting for the quotient map from $\C^2*\C^2$ to $\C^2*_{\C}\C^2$ (\ref{Ex4}).
\end{Ex}

\begin{Ex}\label{Ex21}
Here is an easier example.  Let $n>1$.  Then the quotient map from $\M_n*_{\C}\M_n$ to
$\M_n\otimes\M_n$ cannot split (unitally).  For $\M_n*_{\C}\M_n$ has $\M_n$ as a quotient, but there is
no nonzero homomorphism from $\M_n\otimes\M_n\cong\M_{n^2}$ to $\M_n$.  (Actually it cannot
split nonunitally either; cf.\ Theorem \ref{Thm31}, \S4.)

Similarly, if $m$ and $n$ ($>1$) are not relatively prime, there is no splitting for the quotient map from 
$\M_m*_{\C}\M_n$ to $\M_m\otimes\M_n\cong\M_{mn}$, since $\M_m*_{\C}\M_n$ has $\M_p$ as a quotient,
where $p$ is the least common multiple of $m$ and $n$, and $p<mn$.  If $m$ and $n$ are
relatively prime, the question is more delicate, but it seems unlikely that $\M_m*_{\C}\M_n$
contains a unital copy of $M_{mn}$ (there is no such copy if either $m$ or $n$ is prime
\cite{RordamVFree}).  

These obstructions to a splitting are $K$-theoretic; see section \ref{Sec3} for a discussion.
\end{Ex}

There is somewhat more hope in general of getting a splitting if the free product is replaced by a
``soft tensor product'' $A\circledast_\epsilon B$ where the copies of $A$ and $B$ are required to approximately commute
(there are various ways to make this precise; cf.\ \ref{Sec21}).  In the case of $\C^2\otimes\C^2$, we want to
restrict to the case where the commutator $pq-qp$ of the two generating projections has small
norm, say $\leq\epsilon$ for a specified $\epsilon>0$.  In fact, in this case, as soon as $\epsilon<\frac{1}{2}$
there is a splitting, as is easily seen.  A similar result holds for $\M_m\circledast_\epsilon\M_n$,
where the matrix units of the two matrix algebras $\epsilon$-commute, and more generally for
$A\circledast_\epsilon B$ for $A$ and $B$ finite-dimensional, for small enough $\epsilon$.

\smallskip

But in only slightly more general settings there is no splitting even in the soft tensor product case:  

\begin{Ex}\label{Ex31}
If we regard $C(\T)\otimes C(\T)$ as the universal C*-algebra generated by two commuting unitaries,
and we let $C(\T)\circledast_\epsilon C(\T)$ be the ``soft torus,'' the universal C*-algebra generated by two
unitaries $u$ and $v$ with $\|uv-vu\|\leq\epsilon$ (\cite{ExelSoft}, \cite{ElliottELSoft}, \cite{EilersEFinite}), then there is no splitting for the natural
quotient map from $C(\T)\circledast_\epsilon C(\T)$ to $C(\T)\otimes C(\T)$ for any $\epsilon>0$
(the Voiculescu matrices (\cite{Voiculescu}, \cite{ExelLAlmost}) can be used to show this is impossible).  Thus {\em a fortiori}
there can be no splitting for the quotient map from $C(\T)*_{\C}C(\T)$ to $C(\T)\otimes C(\T)$,
since this quotient map factors through $C(\T)\circledast_\epsilon C(\T)$.
This example can be essentially summarized by saying that $C(\T^2)\cong C(\T)\otimes C(\T)$
is not semiprojective (\ref{SemProp}).
\end{Ex}

\begin{Ex}\label{Ex41}
Not all obstructions to a splitting are $K$-theoretic.  For example, there is no splitting for the
quotient map from $C([0,1])\circledast_\epsilon C([0,1])$ to $C([0,1])\otimes C([0,1])\cong C([0,1]^2)$
for any $\epsilon>0$ since $C([0,1]^2)$ is not semiprojective (\ref{SemProp}), and hence no splitting for
$C([0,1])*_{\C}C([0,1])\to C([0,1])\otimes C([0,1])$ even though there is a splitting on the $K$-theory
level.  (There is, however, a splitting for the quotient map from $C([0,1])*C([0,1])$ to
$C([0,1])*_{\C} C([0,1])$; in fact, there is a simple explicit cross section for the map from
$A*C([0,1])$ to $A*_{\C}C([0,1])$ for any unital $A$.)
\end{Ex}

One can ask whether the quotient map from $A*_{\C}B$ to $A\otimes B$ {\em ever} splits
(if neither $A$ nor $B$ is $\C$).
Perhaps surprisingly, the answer is yes: it splits if $A$ and $B$ are certain Kirchberg algebras
(but far from all pairs of Kirchberg algebras).  For example, in one of our main results 
we show (Corollary \ref{KirchCor}) it splits if $A=B=O_2$
or if $A$ is any semiprojective Kirchberg algebra and $B=O_\infty$.  In fact, even the quotient
map from $A*B$ to $A\otimes B$ splits in these cases.  It has been known that in each of these
cases, the quotient map from $A\circledast_\epsilon B$ to $A\otimes B$ splits 
if $\epsilon>0$ is sufficiently small, where $A\circledast_\epsilon B$ denotes the universal
C*-algebra in which the standard generators and their adjoints in the two algebras 
$\epsilon$-commute (this is easily proved from the definition of semiprojectivity, cf.\ \ref{SemProp});
but it came as a surprise to the author that there is a splitting even when no commutation condition
(or any other relation) between the algebras is assumed.

See Section \ref{Sec2} for definitions and a brief discussion of semiprojectivity and Kirchberg algebras,
and \cite{BlackadarOperator} for general information about C*-algebras.  \cite{LoringLifting}
contains an extensive discussion of lifting problems and semiprojectivity for C*-algebras.

I am indebted to the referee for pointing out a gap in the proof of Theorem \ref{Thm31}, and for 
comments leading to Theorem \ref{Thm40} and an improvement in Theorem \ref{Thm52}.  I also
thank Mikael R{\o}rdam and Wilhelm Winter for helpful conversations.

\section{Preliminaries}\label{Sec2}

\begin{Numb}\label{Sec21}
If $A$ and $B$ are separable and unital, and $\cG$ and $\cH$ are sets of generators for $A$ and $B$ 
respectively which are finite or sequences converging to 0, and $\epsilon>0$, we define
$A\circledast_\epsilon B$ to be the universal unital C*-algebra generated by unital copies of
$A$ and $B$ such that $\|[a,b]\|\leq\epsilon$ for all $a\in\cG$, $b\in\cH$.  The notation does
not reflect the dependence on $\cG$ and $\cH$, which is not too important qualitatively;
if we instead write $A\circledast_{\epsilon,\cG,\cH}B$, and $\cG'$ and $\cH'$ are different
sequences of generators converging to 0, then for any $\epsilon>0$ there is an $\epsilon'>0$
such that the natural quotient map from $A\circledast_{\epsilon,\cG,\cH}B$ to $A\otimes B$
factors through $A\circledast_{\epsilon',\cG',\cH'}B$.  We will thus fix $\cG$ and $\cH$ and 
just write $A\circledast_\epsilon B$.

The natural quotient map from $A\circledast_\epsilon B$ to $A\otimes B$ factors through
$A\circledast_{\epsilon'}B$ for any $\epsilon'<\epsilon$.
\end{Numb}

\begin{Numb}
Recall the definition of semiprojectivity (\cite{BlackadarShape}, \cite[II.8.3.7]{BlackadarOperator}):
A separable C*-algebra $A$ is {\em semiprojective} if, whenever $D$ is a C*-algebra, $(J_n)$ an increasing sequence of closed (two-sided) ideals of $D$, and $J=[\cup J_n]^-$, then any homomorphism $\phi:A\to D/J$ can be partially lifted to a homomorphism
$\psi:A\to D/J_n$ for some sufficiently large $n$.

If $A$ is unital, then in checking semiprojectivity from the definition $D$ and $\phi$ may be 
chosen unital, and then the partial lift $\psi$ can be required to be unital (in fact, $\psi$ will
automatically become unital if $n$ is sufficiently increased).

There are many known examples of semiprojective C*-algebras, but semiprojectivity is quite
restrictive.  For example, if $X$ is a compact metrizable space, $C(X)$ is semiprojective
if and only if $X$ is an ANR of dimension $\leq1$ \cite{SorensenTCharacterization}; the
dimension restriction is essentially exactly the fact that commutation relations cannot be
partially lifted in general.  See also \ref{Sec25}.
\end{Numb}

\begin{Prop}\label{SemProp}
Let $A$ and $B$ be unital semiprojective C*-algebras.  Then the quotient map from 
$A\circledast_\epsilon B$ to $A\otimes B$ splits (unitally) for some $\epsilon>0$ (hence for all
sufficiently small $\epsilon>0$) if and only if $A\otimes B$ is semiprojective.
\end{Prop}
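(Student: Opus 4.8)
The plan is to prove both implications directly from the definition of semiprojectivity, organized around the tower of quotient maps recorded in \ref{Sec21}. Fix a decreasing sequence $\epsilon_1>\epsilon_2>\cdots\to 0$ and self-adjoint generating sets $\cG,\cH$ (which we may assume by replacing each generator with its real and imaginary parts, the choice being qualitatively irrelevant by \ref{Sec21}); write $D_n=A\circledast_{\epsilon_n}B$, with connecting surjections $\rho_n:D_n\to D_{n+1}$ and natural surjections $\pi_n:D_n\to A\otimes B$ satisfying $\pi_{n+1}\circ\rho_n=\pi_n$. The first step, common to both directions, is the approximation lemma $\varinjlim_n D_n\cong A\otimes B$. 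I would prove it by noting that in $\varinjlim_n D_n$ the norm of a commutator $[g,h]$ ($g\in\cG$, $h\in\cH$) equals the decreasing limit $\lim_n\|[g,h]\|_{D_n}\le\lim_n\epsilon_n=0$; since $\cG,\cH$ are self-adjoint this forces the whole copies of $A$ and $B$ to commute, so the universal property of the maximal tensor product provides an inverse to the canonical surjection $\varinjlim_n D_n\to A\otimes B$. Setting $D=D_1$ and $J_n=\ker(D_1\to D_n)$, the $J_n$ increase and $J:=\overline{\bigcup_n J_n}=\ker\pi_1$, so that $D/J_n\cong D_n$ and $D/J\cong A\otimes B$, with the quotient $D/J_n\to D/J$ corresponding to $\pi_n$.

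Suppose first that $A\otimes B$ is semiprojective. I would apply the definition to the system $(D,(J_n),J)$ above and to the identity $\mathrm{id}:A\otimes B\to D/J$. Semiprojectivity yields a (unital) partial lift $\psi:A\otimes B\to D/J_n\cong A\circledast_{\epsilon_n}B$ for some large $n$; composing $\psi$ with $D/J_n\to D/J$ gives $\pi_n\circ\psi=\mathrm{id}$, so $\psi$ splits $\pi_n$ for $\epsilon=\epsilon_n$. Composing $\psi$ further with the quotients $D_n\to D_m$ ($m>n$), which are compatible with the $\pi$'s, produces splittings for all smaller $\epsilon$, which justifies the parenthetical ``for all sufficiently small $\epsilon$''. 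This direction uses only semiprojectivity of $A\otimes B$, not of $A$ and $B$.

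For the converse, suppose $\sigma:A\otimes B\to A\circledast_\epsilon B$ splits $\pi_\epsilon$, and verify the definition for $A\otimes B$ directly. Given $D$, increasing ideals $(J_n)$ with $J=\overline{\bigcup_n J_n}$, and $\phi:A\otimes B\to D/J$, I would restrict $\phi$ to the commuting copies of $A$ and $B$ and invoke semiprojectivity of $A$ and of $B$ to obtain, at some common level $n_0$, lifts $\tilde\phi_A:A\to D/J_{n_0}$ and $\tilde\phi_B:B\to D/J_{n_0}$. Their ranges need not commute, but each commutator $[\tilde\phi_A(g),\tilde\phi_B(h)]$ lies in $J/J_{n_0}$, since its image in $D/J$ vanishes; hence its norm in $D/J_n$ tends to $0$ as $n\to\infty$ because $J=\overline{\bigcup_n J_n}$. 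So for $n$ large the induced lifts to $D/J_n$ satisfy the $\epsilon$-commutation relations and, by the universal property of $A\circledast_\epsilon B$, assemble into a homomorphism $\Psi:A\circledast_\epsilon B\to D/J_n$. Since $\Psi$ followed by the quotient $D/J_n\to D/J$ agrees with $\phi\circ\pi_\epsilon$ on generators, hence everywhere, the composite $\Psi\circ\sigma:A\otimes B\to D/J_n$, followed by $D/J_n\to D/J$, equals $\phi\circ\pi_\epsilon\circ\sigma=\phi$; thus $\Psi\circ\sigma$ is the required partial lift of $\phi$ and $A\otimes B$ is semiprojective.

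The main obstacle is twofold. In the first direction it is the approximation lemma $\varinjlim_n A\circledast_{\epsilon_n}B\cong A\otimes B$: one must argue that the inductive-limit norm annihilates the commutators and, using self-adjointness of the generating sets, that the entire copies of $A$ and $B$ (not merely the generators) commute in the limit. In the converse the delicate point is securing the $\epsilon$-commutation estimate uniformly over the generating sets when $\cG,\cH$ are infinite sequences converging to $0$: only finitely many commutators require the $J=\overline{\bigcup_n J_n}$ argument, since $\|[\tilde\phi_A(g_k),\tilde\phi_B(h_l)]\|\le 2\|g_k\|\,\|h_l\|\to 0$ controls the rest uniformly in $n$.
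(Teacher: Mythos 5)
Your proof is correct and follows essentially the same route as the paper's: the forward direction applies semiprojectivity of $A\otimes B$ to the tower of quotients $A\circledast_{\epsilon_n}B$ with inductive limit $A\otimes B$, and the converse lifts $\phi|_{A\otimes 1}$ and $\phi|_{1\otimes B}$ separately, notes the lifted generators asymptotically commute, factors through $A\circledast_\epsilon B$, and composes with $\sigma$. The only differences are cosmetic (you take $D=A\circledast_{\epsilon_1}B$ where the paper takes $D=A*_{\mathbb C}B$) plus some extra care you supply for details the paper leaves implicit, namely the identification $\varinjlim_n A\circledast_{\epsilon_n}B\cong A\otimes B$ and the uniform commutator estimate over infinite generating sequences.
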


\begin{proof}
Suppose $A\otimes B$ is semiprojective.  Set $D=A*_{\C}B$ and $J_n$ the kernel of the
natural quotient map from $D$ onto $A\circledast_{1/n}B$.  Then $(J_n)$ is an increasing
sequence of closed ideals of $D$, and if $J=[\cup_n J_n]^-$, then $D/J\cong A\otimes B$.
By semiprojectivity there is a (unital) partial lift of the identity map on $A\otimes B$ for some $n$.

Conversely, suppose there is a splitting $\sigma$ for the quotient map from $A\circledast_\epsilon B$ 
to $A\otimes B$ for some
$\epsilon>0$.  Let $D$, $(J_n)$, $\phi$ be as in the definition of semiprojectivity, with $D$
and $\phi$ unital.  For some sufficiently large $n$ there are unital partial lifts $\psi_A$ of
$\phi|_{A\otimes 1}$ and $\psi_B$ of $\phi|_{1\otimes B}$, which give a unital homomorphism
$\psi$ from $A*_{\C}B$ to $D/J_n$ which is a lift of $\phi\circ\pi$, where $\pi$ is the quotient map
from $A*_{\C}B$ to $A\otimes B$ (this is essentially the argument that shows that $A*_{\C}B$
is semiprojective).  Since the partial lifts of the generators of $A$ and $B$ asymptotically
commute as $n\to\infty$, if $n$ is sufficiently increased, the map $\psi$ factors through 
$A\circledast_\epsilon B$.  Then $\psi\circ\sigma$ is a partial lift of $\phi$, so $A\otimes B$
is semiprojective.
\end{proof}

\begin{Numb}\label{Kirch}
A {\em Kirchberg algebra} is a separable nuclear purely infinite (simple unital) C*-algebra.
It is not known whether such a C*-algebra is automatically in the bootstrap class for the Universal Coefficient
Theorem \cite[22.3.4]{BlackadarKTheory}; a Kirchberg algebra in this bootstrap class is called a {\em UCT Kirchberg algebra}.

The first Kirchberg algebras to be studied were the {\em Cuntz algebras} $O_n$, $2\leq n\leq\infty$.
If $2\leq n<\infty$, $O_n$ is the universal (unital) C*-algebra generated by $n$ isometries with
mutually orthogonal range projections adding up to the identity.  $O_\infty$ is the universal (unital)
C*-algebra generated by a sequence of isometries with mutually orthogonal range projections.
The next class was the (simple) Cuntz-Krieger algebras $O_A$.  These are all UCT Kirchberg algebras.

Kirchberg (in part done also independently by Phillips) showed that the UCT Kirchberg algebras
are classified by their $K$-theory.  If $A$ is a unital C*-algebra, write $L(A)$ for the triple
$(K_0(A),K_1(A),[1_A])$; $L(A)$ is part of the Elliott invariant of $A$, and is the entire Elliott
invariant of $A$ if $A$ is a Kirchberg algebra.  A unital *-homomorphism $\phi$ from $A$ to $B$ induces
a morphism from $L(A)$ to $L(B)$, i.e.\ a homomorphism $\phi_{\ast0}:K_0(A)\to K_0(B)$ with
$\phi_{\ast0}([1_A])=[1_B]$ and a homomorphism $\phi_{\ast1}:K_1(A)\to K_1(B)$. The following
facts are known (see \cite{Rordam} for a good exposition):
\begin{enumerate}
\item[(i)]  If $A$ and $B$ are UCT Kirchberg algebras and $\alpha:L(A)\to L(B)$ is a morphism,
then there is a unital *-homomorphism $\phi:A\to B$ with $\phi_\ast=\alpha$.  If $\alpha$ is an
isomorphism, $\phi$ can be chosen to be an isomorphism.  In particular, $L(A)$ is a complete
isomorphism invariant of $A$ among UCT Kirchberg algebras.
\item[(ii)]  If $G_0$ and $G_1$ are any countable abelian groups (recall that the $K$-groups of any
separable C*-algebra are countable), and $u$ is any element of $G_0$, then there is a
UCT Kirchberg algebra $A$ (unique up to isomorphism) with $L(A)\cong(G_0,G_1,u)$.
\end{enumerate}

Cuntz showed that $L(O_n)=(\Z_{n-1},0,1)$ if $n<\infty$ and $L(O_\infty)=(\Z,0,1)$.  As technical
results toward the above classification, Kirchberg (\cite{KirchbergPEmbedding}; cf.\ \cite{Rordam}) showed the following facts about tensor products:
\begin{enumerate}
\item[(iii)]  For any Kirchberg algebra $A$ (UCT or not), $O_2\otimes A\cong O_2$.  In particular, 
$O_2\otimes O_2\cong O_2$.
\item[(iv)]  For any Kirchberg algebra $A$ (UCT or not), $O_\infty\otimes A\cong A$.  In particular, 
$O_\infty\otimes O_\infty\cong O_\infty$.
\item[(v)]  The infinite tensor products $O_2^\infty$ and $O_\infty^\infty$ are isomorphic to $O_2$
and $O_\infty$ respectively.
\end{enumerate}

The isomorphism $O_2\otimes O_2\cong O_2$ (which was first shown by Elliott; cf.\ \cite{RordamShort}) and the others 
are quite deep results.  They are highly
nonconstructive; in fact, it is in principle essentially impossible to give an explicit isomorphism
of $O_2\otimes O_2$ and $O_2$ by the results of \cite{AraC}.
\end{Numb}

\begin{Numb}\label{Sec25}
It has recently been shown \cite{Enders} that if $A$ is a UCT Kirchberg algebra, then $A$ is
semiprojective if and only if $K_\ast(A)$ is finitely generated (in fact, this problem was the original
motivation for the work of the present paper).  Special cases were previously known: it is easy
to show that $O_n$ ($n<\infty$) and, more generally, $O_A$ for any $A$ is semiprojective
\cite{BlackadarShape}.  $O_\infty$ was shown to be semiprojective in \cite{BlackadarSemiprojectivity},
and more general results were obtained in \cite{Szymanski} and \cite{Spielberg}.
\end{Numb}

\section{$K$-Theoretic Obstructions, Unital Free Product Case}\label{Sec3}

It turns out that there are rather severe $K$-theoretic obstructions to a splitting
for the quotient map from $A*_{\C}B$ to $A\otimes B$.  In this section,
we will restrict attention to separable nuclear C*-algebras in the UCT class \cite[22.3.4]{BlackadarKTheory},
so that the K\"{u}nneth Theorem for Tensor Products \cite[23.1.3]{BlackadarKTheory} and the results of
\cite{Germain} hold, although some 
of what we do here works in greater generality.  The analysis is largely an elementary (but moderately
complicated) exercise in group theory, so some details are omitted.

Let $A$ and $B$ be unital C*-algebras of the above form.  We recall the results of the K\"{u}nneth
theorem and of \cite{Germain}:

\smallskip

$$K_0(A\otimes B)\cong [K_0(A)\otimes_{\Z}K_0(B)]\oplus[K_1(A)\otimes_{\Z}K_1(B)]
\oplus\Tor_1^{\Z}(K_0(A),K_1(B))\oplus\Tor_1^{\Z}(K_1(A),K_0(B))$$
$$K_1(A\otimes B)\cong [K_0(A)\otimes_{\Z}K_1(B)]\oplus[K_1(A)\otimes_{\Z}K_0(B)]
\oplus\Tor_1^{\Z}(K_0(A),K_0(B))\oplus\Tor_1^{\Z}(K_1(A),K_1(B))$$

\smallskip

\noindent
where $\Tor_1^{\Z}$ denotes the Tor-functor of homological algebra.

\smallskip

From the exact sequence of \cite{Germain}, we obtain:

\smallskip

$$K_0(A*B)\cong K_0(A)\oplus K_0(B)$$
$$K_1(A*B)\cong K_1(A)\oplus K_1(B)$$

\smallskip

$$K_0(A*_{\C}B)\cong[K_0(A)\oplus K_0(B)]/\langle([1_A],-[1_B])\rangle$$

\smallskip

$$K_1(A*_{\C}B)\cong K_1(A)\oplus K_1(B)\oplus\Z$$
if $[1_A]$ and $[1_B]$ are torsion elements of $K_0(A)$ and $K_0(B)$ respectively, and
$$K_1(A*_{\C}B)\cong K_1(A)\oplus K_1(B)$$
otherwise.


In $K_1(A*_{\C}B)$, the first two summands are well defined as subsets of the group, but the
extra $\Z$ summand is not.

\smallskip

The quotient map from $A*B$ to $A*_{\C}B$ induces the obvious maps on the $K$-groups.

The quotient map $\pi$ from $A*_{\C}B$ to $A\otimes B$ induces the following maps:
on $K_0$, $\pi_{\ast0}(x,y)=x\otimes[1_B]+[1_A]\otimes y$ in the $K_0(A)\otimes_{\Z}K_0(B)$ summand; on the first two summands of $K_1$,
$\pi_{\ast1}(x,y)=([1_A]\otimes y)\oplus(x\otimes[1_B])$ in the $[K_0(A)\otimes_{\Z}K_1(B)]\oplus[K_1(A)\otimes_{\Z}K_0(B)]$ summands.  

If there is a splitting, there must be cross sections for these maps.  There will be a $K$-theoretic
obstruction almost any time both $A$ and $B$ have nontrivial $K_1$, or if the rank of $K_0$
of both $A$ and $B$ is at least 2. 

We first dispose of the extra summand of $\Z$ in $K_1$ when it occurs.
One can give a more precise and detailed analysis of this summand, but the following argument
suffices for the purposes of this paper.

\begin{Lem}
If $[1_A]$ and $[1_B]$ are torsion, the extra summand $\Z$ in $K_1(A*_{\C}B)$ can be chosen
so that the restriction $\phi$ of the quotient map to this summand maps into 
$\Tor_1^{\Z}(K_0(A),K_0(B))$.
\end{Lem}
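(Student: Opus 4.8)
The plan is to read off the extra summand directly from the Germain six-term sequence and then to determine the image of its generator under $\pi_{\ast1}$ up to the ambiguity inherent in the choice of summand. Since $K_0(\C)=\Z$ and $K_1(\C)=0$, the sequence of \cite{Germain} attached to the unital inclusions $\C\to A$ and $\C\to B$ collapses to a short exact sequence
\[ 0\to K_1(A)\oplus K_1(B)\xrightarrow{\ \iota\ } K_1(A*_{\C}B)\xrightarrow{\ \partial\ }\Z , \]
in which the image of $\partial$ is the kernel of the map $\Z\to K_0(A)\oplus K_0(B)$, $1\mapsto([1_A],-[1_B])$. When $[1_A]$ and $[1_B]$ are torsion of orders $d_A,d_B$, this kernel is $m\Z$ with $m=\mathrm{lcm}(d_A,d_B)$, so $\partial$ surjects onto $m\Z\cong\Z$ and, that group being free, splits. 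A splitting is precisely a choice of the extra summand, with generator $g_0$ satisfying $\partial(g_0)=m$; any two admissible generators differ by an element of $\iota(K_1(A)\oplus K_1(B))$.

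Next I would decompose $\pi_{\ast1}(g_0)=(\alpha,\beta,\gamma,\delta)$ along the four K\"unneth summands of $K_1(A\otimes B)$, with $\alpha\in K_0(A)\otimes_{\Z} K_1(B)$, $\beta\in K_1(A)\otimes_{\Z} K_0(B)$, $\gamma\in\Tor_1^{\Z}(K_0(A),K_0(B))$, and $\delta\in\Tor_1^{\Z}(K_1(A),K_1(B))$. Replacing $g_0$ by $g_0+\iota(x,y)$ alters $\pi_{\ast1}(g_0)$ by $\pi_{\ast1}\iota(x,y)=([1_A]\otimes y)\oplus(x\otimes[1_B])$, which lies in $[1_A]\otimes K_1(B)$ in the first slot and in $K_1(A)\otimes[1_B]$ in the second, and is zero in both Tor slots (these classes are products coming from $A$ or $B$ alone). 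Hence the Lemma is equivalent to the three assertions: (a) $\alpha\in[1_A]\otimes K_1(B)$; (b) $\beta\in K_1(A)\otimes[1_B]$; (c) $\delta=0$. Given these, choosing $x,y$ with $[1_A]\otimes y=\alpha$ and $x\otimes[1_B]=\beta$ and adjusting $g_0$ by $\iota(x,y)$ cancels $\alpha$ and $\beta$ and leaves $\pi_{\ast1}(g_0)$ inside $\Tor_1^{\Z}(K_0(A),K_0(B))$.

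To establish (a)--(c) I would identify $\pi_{\ast1}(g_0)$ with a K\"unneth Tor-class built from the unit classes. The class $g_0$ is detected by $\partial$, hence is supported on the amalgamated copy of $\C$; the idea is to compare the Germain boundary $K_1(A*_{\C}B)\to K_0(\C)$ with the boundary map of the six-term sequence that manufactures the Tor terms in the K\"unneth theorem, namely the connecting map of the geometric free resolution of $B$ tensored with $A$. Both are connecting homomorphisms of six-term sequences, and $\pi$ supplies a morphism between the two extensions; naturality in $A$ and $B$, together with the given formula $\pi_{\ast0}(x,y)=x\otimes[1_B]+[1_A]\otimes y$ on $K_0$, should then pin down $\pi_{\ast1}(g_0)$ as the Tor-class $\tau([1_A],[1_B])$ arising from $d_A[1_A]=0=d_B[1_B]$. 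Because the Germain boundary lands in the even group $K_0(\C)$, the resulting class involves only the even data $[1_A],[1_B]$ and carries no $K_1$-information; this parity is what forces $\delta=0$, while the matching exhibits the tensor components of $\pi_{\ast1}(g_0)$ as exactly the $\iota$-adjustment terms, giving (a) and (b).

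The one genuinely nontrivial step is this comparison of boundary maps: one must produce the morphism of extensions intertwining the Germain connecting map with the K\"unneth connecting map and check that it carries $g_0$ to $\tau([1_A],[1_B])$. I expect this to be the main obstacle, and the place where the ``moderately complicated'' bookkeeping resides; it is probably cleanest to carry out in $KK$-theory, or via the mapping-cone (``smash-product'') model for the map $A*_{\C}B\to A\otimes B$, whose reduced $K$-theory consists precisely of the mixed tensor and Tor terms. Everything else is the formal group-theoretic manipulation of the first two paragraphs.
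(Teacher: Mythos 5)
Your first two paragraphs are a correct and clean reduction: you identify the extra $\Z$ as a splitting of the Germain boundary onto $m\Z=\ker(1\mapsto([1_A],-[1_B]))$, you correctly compute that changing the splitting moves $\pi_{\ast1}(g_0)$ only by classes of the form $([1_A]\otimes y)\oplus(x\otimes[1_B])$, and you correctly conclude that the lemma is equivalent to your assertions (a), (b), (c). But the proof stops there. The entire content of the lemma is now concentrated in (a)--(c), and your third and fourth paragraphs do not prove them: the ``parity'' argument for $\delta=0$ and the claim that naturality ``should pin down'' $\pi_{\ast1}(g_0)$ as a Tor-class of the unit classes are plausibility statements, not arguments. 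You acknowledge this yourself (``I expect this to be the main obstacle''). The comparison of the Germain connecting map with the K\"unneth connecting map is exactly the hard part, and nothing in the proposal actually constructs the morphism of extensions or verifies where $g_0$ lands. As written, this is a genuine gap.

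The paper avoids this bookkeeping entirely by a reduction you may find instructive. Since the Germain sequence is natural and unchanged under replacing $A,B$ by $A\otimes O_\infty, B\otimes O_\infty$, one may assume $A$ and $B$ are properly infinite; then $m[1_A]=0$ and $n[1_B]=0$ force unital embeddings $O_{m+1}\hookrightarrow A$ and $O_{n+1}\hookrightarrow B$. One takes the extra $\Z$ summand of $K_1(A*_{\C}B)$ to be the image of $K_1(O_{m+1}*_{\C}O_{n+1})$, which \emph{is} canonically the extra $\Z$ there because $K_1(O_{m+1})=K_1(O_{n+1})=0$. Its image in $K_1(A\otimes B)$ then factors through $K_1(O_{m+1}\otimes O_{n+1})$, which by K\"unneth equals $\Tor_1^{\Z}(\Z_m,\Z_n)$ with no tensor summands at all, and naturality of the K\"unneth decomposition carries this into $\Tor_1^{\Z}(K_0(A),K_0(B))$. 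In other words, the paper chooses the summand so that (a), (b), (c) hold by construction, rather than verifying them for an arbitrary choice. If you want to complete your route, you would need to actually carry out the connecting-map comparison; alternatively, you could graft the paper's $O_\infty$-stabilization trick onto your setup to discharge (a)--(c) at once.
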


\begin{proof}
Suppose $m[1_A]=0$ in $K_0(A)$ and $n[1_B]=0$ in $K_0(B)$ for natural numbers $m,n$.  
Since the exact sequence of
\cite{Germain} is natural (functorial), it does not change if we replace $A$ and $B$ by 
$A\otimes O_\infty$ and $B\otimes O_\infty$ respectively, i.e.\ we may assume $A$ and $B$ are
properly infinite.  Thus $A$ and $B$ contain unital copies of $O_{m+1}$ and $O_{n+1}$ respectively.
There is a corresponding summand of $\Z$ in $K_1(O_{m+1}*_{\C}O_{n+1})$, which is well defined
as the entire group $K_1(O_{m+1}*_{\C}O_{n+1})$, and the $Z$ summand in $K_1(A*_{\C}B)$
can be taken to be the image of this summand, which maps into 
$\Tor_1^{\Z}(K_0(O_{m+1}),K_0(O_{n+1}))$ since this is all of $K_1(O_{m+1}\otimes O_{n+1})$.
\end{proof}

\begin{Cor}
There can be a splitting at the $K$-theory level only if $\phi$ is zero, i.e.\ the image of $K_1(A*_{\C}B)$
in $K_1(A\otimes B)$ is the same as the image of the subgroup $K_1(A)\oplus K_1(B)$.
\end{Cor}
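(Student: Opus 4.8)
The plan is to assume a splitting at the $K$-theory level and deduce $\phi=0$ by exploiting the clash between the freeness of the extra summand of $K_1(A*_{\C}B)$ and the fact that, by the preceding Lemma, $\phi$ lands in a torsion group. First I would dispose of the trivial case: if either $[1_A]$ or $[1_B]$ fails to be torsion, then there is no extra summand, $K_1(A*_{\C}B)\cong K_1(A)\oplus K_1(B)$, and the asserted equality of images is automatic. So assume $[1_A],[1_B]$ are torsion, so that $K_1(A*_{\C}B)\cong K_1(A)\oplus K_1(B)\oplus\Z$ and, by the Lemma, the extra summand is chosen so that $\phi$ carries it into $\Tor_1^{\Z}(K_0(A),K_0(B))$.

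The main subtlety, and the step I expect to need the most care, is that the extra $\Z$ summand is \emph{not} a canonical subgroup (only $K_1(A)\oplus K_1(B)$ is well defined). I would therefore route everything through the canonical quotient $q\colon K_1(A*_{\C}B)\to K_1(A*_{\C}B)/(K_1(A)\oplus K_1(B))\cong\Z$. Writing $P$ for the projection of $K_1(A\otimes B)$ onto its $\Tor_1^{\Z}(K_0(A),K_0(B))$ summand and $t_0:=\phi(1)$ for the image of a generator, the description of $\pi_{\ast1}$ yields the single identity $P\circ\pi_{\ast1}=\bigl(w\mapsto q(w)\,t_0\bigr)$ on all of $K_1(A*_{\C}B)$: indeed $\pi_{\ast1}$ sends $K_1(A)\oplus K_1(B)$ into the summands $[K_0(A)\otimes_{\Z}K_1(B)]\oplus[K_1(A)\otimes_{\Z}K_0(B)]$ (which $P$ kills) and sends the generator to $t_0$. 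Note that this makes $t_0$ itself independent of the non-canonical choice, so no ambiguity survives.

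Now suppose $\sigma$ splits the quotient map, so that $\sigma_{\ast1}\colon K_1(A\otimes B)\to K_1(A*_{\C}B)$ is a group homomorphism with $\pi_{\ast1}\sigma_{\ast1}=\mathrm{id}$. Set $m:=q(\sigma_{\ast1}(t_0))\in\Z$. Applying the identity of the previous paragraph to $\sigma_{\ast1}(t_0)$ and using $\pi_{\ast1}\sigma_{\ast1}(t_0)=t_0$ together with $P(t_0)=t_0$ gives $m\,t_0=t_0$. On the other hand, since $\Tor_1^{\Z}(K_0(A),K_0(B))$ is a torsion group, $t_0$ has finite order $k$; then $k\,t_0=0$ forces $0=q(\sigma_{\ast1}(k\,t_0))=q(k\,\sigma_{\ast1}(t_0))=k\,m$, whence $m=0$. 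Combining $m\,t_0=t_0$ with $m=0$ yields $t_0=0$, i.e.\ $\phi=0$.

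It remains only to record the equivalent reformulation asserted in the statement. Since $\phi(\Z)$ lies in the $\Tor_1^{\Z}(K_0(A),K_0(B))$ summand while $\pi_{\ast1}(K_1(A)\oplus K_1(B))$ lies in the complementary summands $[K_0(A)\otimes_{\Z}K_1(B)]\oplus[K_1(A)\otimes_{\Z}K_0(B)]$, the image of $K_1(A*_{\C}B)$ coincides with the image of $K_1(A)\oplus K_1(B)$ exactly when $\phi(\Z)=0$. Thus the only real work is the short order computation in $\Z$ of the third paragraph; the structural input is entirely the decomposition of $K_1(A\otimes B)$ from the K\"unneth theorem and the localization of $\phi$ provided by the Lemma.
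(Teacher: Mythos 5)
Your argument is correct and is essentially the paper's own proof, just written out in full: the identity $P\circ\pi_{\ast1}=q(\cdot)\,t_0$ and the computation $mt_0=t_0$, $km=0$ are exactly the careful verification of the paper's terse observation that a map from $\Z$ into a torsion group cannot be injective and that a non-injective quotient of $\Z$ can only split if it is zero. The extra care you take with the non-canonical $\Z$ summand (routing through the canonical quotient $q$) is a worthwhile clarification but does not change the underlying argument.
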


\begin{proof}
$\Tor_1^{\Z}(K_0(A),K_0(B))$ is a torsion group, so $\phi$ cannot be injective.  A quotient map
of $\Z$ can only split if it is zero or injective.
\end{proof}

Thus we are reduced to considering only the summands $K_1(A)\oplus K_1(B)$ of $K_1(A*_{\C}B)$.

\begin{Prop}\label{KthObsProp}
A splitting for the quotient map from $A*_{\C}B$ to $A\otimes B$ is impossible under any of the 
following conditions:
\begin{enumerate}
\item[(i)]  $K_1(A)\otimes_{\Z}K_1(B)\neq0$.
\item[(ii)]  $(rank(K_0(A)))(rank(K_0(B)))>rank(K_0(A))+rank(K_0(B))$, or 
\linebreak
$rank(K_0(A))+rank(K_0(B))-1$ if $[1_A]$ or $[1_B]$ has infinite order.
\item[(iii)]  $\Tor_1^{\Z}(K_\ast(A),K_\ast(B))\neq0$.
\end{enumerate}
\end{Prop}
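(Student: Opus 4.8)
The plan is to pass to $K$-theory and exploit that a C*-algebra splitting $\sigma$ of $\pi$ forces $\pi_{\ast0}$ and $\pi_{\ast1}$ to be \emph{split} surjections: from $\pi\circ\sigma=\mathrm{id}$ one gets $\pi_{\ast i}\circ\sigma_{\ast i}=\mathrm{id}$ on $K_i(A\otimes B)$ for $i=0,1$, so in particular each $\pi_{\ast i}$ must be surjective. I would then assume a splitting exists and derive a contradiction under each of (i)--(iii) by showing the relevant $\pi_{\ast i}$ cannot be onto, reading off its image from the explicit formulas above and the K\"unneth decomposition. The one structural fact that drives everything is that, by those formulas, $\mathrm{im}(\pi_{\ast0})$ lies entirely inside the $K_0(A)\otimes_{\Z}K_0(B)$ summand of $K_0(A\otimes B)$, and, after using the Corollary to discard the extra $\Z$ summand (whose contribution $\phi$ must vanish for any map admitting a $K$-theoretic cross section), $\mathrm{im}(\pi_{\ast1})$ lies inside the $[K_0(A)\otimes_{\Z}K_1(B)]\oplus[K_1(A)\otimes_{\Z}K_0(B)]$ summands of $K_1(A\otimes B)$.

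Granting this, (i) and (iii) are immediate. Since $\pi_{\ast0}$ is surjective but has image contained in the $K_0(A)\otimes_{\Z}K_0(B)$ summand, every complementary summand of $K_0(A\otimes B)$ must vanish; in particular $K_1(A)\otimes_{\Z}K_1(B)=0$, contradicting (i), and likewise $\Tor_1^{\Z}(K_0(A),K_1(B))=\Tor_1^{\Z}(K_1(A),K_0(B))=0$. For the remaining two Tor terms I would use $\pi_{\ast1}$: after the Corollary, its image misses $\Tor_1^{\Z}(K_0(A),K_0(B))$ and $\Tor_1^{\Z}(K_1(A),K_1(B))$, so surjectivity of $\pi_{\ast1}$ forces these to vanish as well. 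Thus if \emph{any} of the four Tor terms is nonzero, the corresponding map fails to be onto, giving (iii).

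For (ii) I would argue by torsion-free rank. Write $r_A=\mathrm{rank}(K_0(A))$ and $r_B=\mathrm{rank}(K_0(B))$. Composing the surjection $\pi_{\ast0}$ with the projection onto the $K_0(A)\otimes_{\Z}K_0(B)$ summand produces a surjection onto a group of rank $r_Ar_B$. Its domain is $K_0(A*_{\C}B)=[K_0(A)\oplus K_0(B)]/\langle([1_A],-[1_B])\rangle$; since $\Q$ is flat over $\Z$, rank is additive on short exact sequences, so this domain has rank $r_A+r_B$ when $[1_A]$ and $[1_B]$ are both torsion, and rank $r_A+r_B-1$ as soon as $[1_A]$ or $[1_B]$ has infinite order (the generating element then has infinite order). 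A surjection cannot increase rank, so we would need $r_Ar_B\le r_A+r_B$ (respectively $\le r_A+r_B-1$); condition (ii) is exactly the negation, so no splitting can exist.

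The main obstacle is the bookkeeping in (ii): computing $\mathrm{rank}(K_0(A*_{\C}B))$ correctly, in particular pinning down precisely when the cyclic relation $\langle([1_A],-[1_B])\rangle$ drops the rank by one, and isolating the composite onto the $K_0(A)\otimes_{\Z}K_0(B)$ summand so that the inequality $\mathrm{rank(domain)}\ge\mathrm{rank(codomain)}$ applies cleanly. A secondary delicate point is the invocation of the Corollary in case (iii): one must be sure the extra $\Z$ summand of $K_1(A*_{\C}B)$ does not secretly surject onto $\Tor_1^{\Z}(K_0(A),K_0(B))$, which is precisely what the Corollary guarantees for any map admitting a $K$-theoretic cross section.
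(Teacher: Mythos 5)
Your proposal is correct and follows essentially the same route as the paper, which proves this Proposition implicitly via the preceding discussion: the K\"unneth and free-product $K$-theory formulas, the observation that $\im(\pi_{\ast0})$ lands in the $K_0(A)\otimes_{\Z}K_0(B)$ summand and $\im(\pi_{\ast1})$ (after the Lemma and Corollary kill the extra $\Z$ summand) lands in $[K_0(A)\otimes_{\Z}K_1(B)]\oplus[K_1(A)\otimes_{\Z}K_0(B)]$, plus the rank count on $K_0(A*_{\C}B)=[K_0(A)\oplus K_0(B)]/\langle([1_A],-[1_B])\rangle$. Your write-up supplies exactly the details the paper declares "an elementary exercise in group theory" and omits.
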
 

This is only the beginning of the $K$-theoretic obstructions.  We will analyze the case where
$K_\ast(A)$ and $K_\ast(B)$ are finitely generated.  Then we have
$$K_0(A)=\Z^a\oplus G_0,\ \ \ K_1(A)=\Z^n\oplus G_1$$
$$K_0(B)=\Z^b\oplus H_0,\ \ \ K_1(B)=\Z^m\oplus H_1$$
for nonnegative integers $a,b,n,m$ and finite abelian groups $G_0,G_1,H_0,H_1$.

In order to have a splitting, from \ref{KthObsProp}(iii) we need the orders of $G_0$ and $G_1$ to be
relatively prime to the orders of $H_0$ and $H_1$, so we will assume this from now on.  We also have
that $K_1(A)\otimes_{\Z}K_1(B)=0$ is necessary for a splitting by \ref{KthObsProp}(i), which implies
that $n$ and $m$ cannot both be nonzero; without loss of generality we assume $m=0$.  We must
have $\Z^n\otimes_{\Z} H_1\cong H_1^n=0$, so either $n=0$ or $H_1=0$.  
We then obtain:

\smallskip

\begin{equation}
K_0(A*_{\C}B)=(\Z^{a+b}\oplus G_0\oplus H_0)/\langle[1_A],-[1_B]\rangle
\end{equation}
\begin{equation}
K_0(A\otimes B)=\Z^{ab}\oplus G_0^b\oplus H_0^a
\end{equation}

\smallskip

\begin{equation}
K_1(A*_{\C}B)=\Z^n\oplus G_1\oplus H_1
\end{equation}
\begin{equation}
K_1(A\otimes B)=\Z^{nb}\oplus G_1^b\oplus H_1^a\oplus H_0^n
\end{equation}

 \smallskip
 
\noindent
since $G_i\otimes_{\Z}H_j=0$ for all $i,j$. 

We need for the induced maps from $K_i(A*_{\C}B)$ to $K_i(A\otimes B)$ to be surjective
and have a cross section.  For this, we need $H_0^n=0$, so $n=0$ or $H_0=0$.

\begin{Numb}
There are two trivial cases: $K_\ast(A)$ is either $(0,0)$, or $(\Z,0)$ and $[1_A]=1$, in which case there is no
restriction on $K_\ast(B)$.  (The situation is, of course, symmetric in $A$ and $B$.)
\end{Numb} 

\begin{Numb}
Next we can easily dispose of the case where $n>0$.  In this case we observed earlier that $H_0=H_1=0$,
and from (4) we see that $b\leq1$.  Thus $K_\ast(B)$ is either $(0,0)$ or $(\Z,0)$.
Suppose $K_0(B)=\Z$.  The map $\pi_{\ast1}$ from $K_1(A*_{\C}B)\cong\Z^n\oplus G_1$ to 
$K_1(A\otimes B)\cong\Z\oplus G_1$
is multiplication by $[1_B]$; for this to be surjective, we need for $[1_B]$ to be a generator of $K_0(B)$.
Thus we are in one of the trivial cases.
\end{Numb}

\begin{Numb}
From now on we assume $n=0$, i.e.\ $K_1(A)$ and $K_1(B)$ are finite groups.  The situation is
now symmetric in $A$ and $B$.  The map from $K_0(A*_{\C}B)$ to $K_0(A\otimes B)$
must send $\Z^{a+b}/\langle[1_A],-[1_B]\rangle$ onto $\Z^{ab}$, and thus we must have
$ab\leq a+b$, and if either the torsion-free part of $[1_A]$ or $[1_B]$ is nonzero, we must have
$ab\leq a+b-1$.  Thus we have that either $a$ or $b$ is $\leq1$, or $a=b=2$ and the torsion-free
part of both $[1_A]$ and $[1_B]$ is 0.  But in the case $a=b=2$, the maps $\pi_{\ast0}$ 
on the torsion-free part of $K_0$ are zero and cannot be surjective.  Thus there is no
splitting in this case.
\end{Numb} 

\begin{Numb}
Now assume without loss of generality that $a=0$ or 1.  
We first dispose of the cases where $[1_A]$ or $[1_B]$ has finite order (which includes the
case $a=0$).  Suppose
$$(K_0(A),K_1(A),[1_A])=(\Z\oplus G_0,G_1,(u,r))$$
$$(K_0(B),K_1(B),[1_B])=(\Z^b\oplus H_0,H_1,(v,s))$$
Then
$$K_\ast(A*_{\C}B)=((\Z\oplus G_0\oplus\Z^b\oplus H_0)/\langle(u,r,v,s)\rangle,G_1\oplus H_1)$$
$$K_\ast(A\otimes B)=(\Z^b\oplus G_0^b\oplus H_0,G_1^b\oplus H_1)$$
If $u=0$, then $\pi_{\ast0}$ maps $\Z^b\oplus H_0$ to 0, so for the map to be surjective we
must have $b\leq1$ and $H_0=0$; if $b=1$, for $\pi_{\ast0}$ to map $\Z\oplus G_0$
onto $K_0(A\otimes B)$ we must have $v=1$.  Similarly, $\pi_{\ast1}$ maps the $H_1$ to 0, so $H_1=0$.  Thus we are in a trivial case (for $B$).

\noindent
If $v=0$, we argue similarly that $G_0=G_1=0$ and $u=1$, so we are again in a trivial case
(for $A$).

Now consider the case
$$(K_0(A),K_1(A),[1_A])=(G_0,G_1,r)$$
$$(K_0(B),K_1(B),[1_B])=(\Z^b\oplus H_0,H_1,(v,s))$$
where $b\geq1$ and $v=0$.  Then as before $G_0=G_1=0$ and we are in a trivial case (for $A$).

The last case is where both $K_\ast(A)$ and $K_\ast(B)$ are torsion (finite) groups.  Then
$$K_\ast(A\otimes B)=(0,0)$$
and there is no $K$-theoretic restriction.
\end{Numb}

\begin{Numb}
The remaining case is where $a=1$, $b\geq1$, and $[1_A]$ and $[1_B]$ have infinite order.
This is the most delicate case.
Write $u$ and $v$ for the components of $[1_A]$ and $[1_B]$ in the torsion-free parts of
$K_0(A)$ and $K_0(B)$ respectively; we may assume without loss of generality that $u>0$
and that $v=(w,0,\dots,0)$ with $w>0$.

First consider the case $b=1$.  We have
$$K_0(A*_{\C}B)\cong(\Z\oplus G_0\oplus\Z\oplus H_0)/\langle(u,r,-w,-s)\rangle\cong
\Z^2/\langle (u,-w)\rangle\oplus G_0\oplus H_0$$
and the map $\pi_{\ast0}$ to $K_0(A\otimes B)=\Z\oplus G_0^b\oplus H_0$ must send
$\Z^2/\langle(u,-w)\rangle$ onto the first coordinate.  This map is multiplication by $w$ on the first
coordinate plus multiplication by $u$ on the second; thus it can only be surjective if $u$ and $w$
are relatively prime.  In this case, it is both injective and surjective on this piece.

The map $\pi_{\ast0}$ maps the $G_0$ to the $G_0$ and is multiplication by $w$.  For this to
be surjective (hence bijective), we must have $wG_0=G_0$, i.e.\ $w$ must be relatively prime to
the order of $G_0$.  Similarly, $u$ must be relatively prime to the order of $H_0$.  The same
argument for $\pi_{\ast1}$ shows that $u$ and $w$ are relatively prime to the orders of
$H_1$ and $G_1$ respectively.

We cannot rule out the case where $u$ and $v$ are greater than 1 and relatively prime.  For
example, consider the case $u=2$, $v=3$.  For simplicity suppose the $K$-groups are torsion-free,
i.e.\ $K_\ast(A)=(\Z,0,2)$ and $K_\ast(B)=(\Z,0,3)$.  Then $K_0(A*_{\C}B)=\Z^2/\langle(2,-3)\rangle$
with order unit $[(2,0)]=[(0,3)]$.  $K_0(A\otimes B)=\Z$ with order unit 6,
and the map $\pi_{\ast0}$ sends $[(x,y)]$ to $3x+2y$, and is an isomorphism;
the inverse map sends $n$ to $[(n,-n)]$.  It is possible that this inverse map is induced
by a homomorphism on the algebra level in some cases (e.g.\ possibly in the case $A=M_2(O_\infty)$
and $B=M_3(O_\infty)$) (\ref{ExMxO}), although it is not in the case $A=\M_2$, $B=\M_3$ (\ref{Ex33}).

Thus the possibilities are
$$L(A)=(\Z\oplus G_0,G_1,(u,r)),\ \ \ L(B)=(\Z\oplus H_0,H_1,(w,s))$$
where $u$ is relatively prime to $w$ and to the orders of $H_0$ and $H_1$, and $w$ is relatively
prime to the orders of $G_0$ and $G_1$.
\end{Numb}

\begin{Numb}
Now consider the case $b>1$.  We have
$$K_0(A*_{\C}B)\cong(\Z\oplus G_0\oplus\Z^b\oplus H_0)/\langle(u,r,-w,0,\dots,0,-s)\rangle$$
$$\cong\Z^2/\langle (u,-w)\rangle\oplus\Z^{b-1}\oplus G_0\oplus H_0$$
and the map $\pi_{\ast0}$ to $K_0(A\otimes B)=\Z^b\oplus G_0^b\oplus H_0$ must send
$\Z^2/\langle(u,-w)\rangle$ onto the first coordinate.  This map is multiplication by $w$ on the first
coordinate plus multiplication by $u$ on the second; thus it can only be surjective if $u$ and $w$
are relatively prime.  In this case, it is both injective and surjective on this piece.

The map $\pi_{\ast0}$ sends $G_0$ into $G_0^b$ and sends $x$ to $(wx,0,\dots,0)$.
This can only be surjective if $G_0=0$.  Similarly, $\pi_{\ast1}$ sends $G_1$ into $G_1^b$
by the same formula, so $G_1=0$.  Thus $K_\ast(A)=(\Z,0)$.

However, we do not need to have $u=1$.  But there is a restriction: the map $\pi_{\ast0}$ sends
$H_0$ into $H_0$  by multiplication by $u$; this needs to be surjective, i.e.\ $u$ is relatively prime
to the order of $H_0$.  Similarly, $uH_1=H_1$, so $u$ is relatively prime to the order of $H_1$.
Thus, for any $b>1$, we have the possibilities
$$L(A)=(\Z,0,u),\ \ \ L(B)=(\Z^b\oplus H_0,H_1,(w,0,\dots,0,s))$$
where $u$ is relatively prime to $w$ and to the orders of $H_0$ and $H_1$. 
\end{Numb} 

\bigskip

We summarize:

\begin{Thm}\label{Thm31}
Let $A$ and $B$ be separable nuclear unital C*-algebras in the UCT class with finitely generated
$K$-theory.  Then there can be a splitting for the quotient map from $A*_{\C}B$ to $A\otimes B$
only in the following situations:
\begin{enumerate}
\item[(i)] $L(A)=(0,0,0)$ or $(\Z,0,1)$, $L(B)$ arbitrary (or vice versa).  In the first case
$L(A\otimes B)=(0,0,0)$, and in the second $L(A\otimes B)\cong L(B)$.
\item[(ii)]  $L(A)=(G_0,G_1,r)$, $L(B)=(H_0,H_1,s)$.  In this case $L(A\otimes B)=(0,0,0)$.
\item[(iii)]  $L(A)=(\Z\oplus G_0,G_1,(u,r))$, $L(B)=(\Z\oplus H_0,H_1,(w,s))$,
where $u$ is relatively prime to $w$ and to the orders of $H_0$ and $H_1$, and $w$ is relatively
prime to the orders of $G_0$ and $G_1$.
\item[(iv)] $L(A)=(\Z,0,u)$, $L(B)=(\Z^b\oplus H_0,H_1,(w,0,\dots,0,s))$,
where $b>1$, and $u$ is relatively prime to $w$ and to the orders of $H_0$ and $H_1$. 
\end{enumerate}
In (ii)--(iv), $G_0,G_1,H_0,H_1$ are any finite abelian groups with the orders of $G_0$ and $G_1$
relatively prime to the orders of $H_0$ and $H_1$.
\end{Thm}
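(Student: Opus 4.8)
The plan is to assemble the theorem from the case analysis carried out in the preceding paragraphs of this section. Since the statement asserts that a splitting can occur \emph{only} in the listed situations, it suffices to establish necessity, so throughout I track only the group-theoretic conditions forced by the requirement that $\pi_{\ast0}$ and $\pi_{\ast1}$ admit cross sections, and I never argue that a splitting actually exists. First I would record the $K$-groups supplied by the K\"{u}nneth theorem and the exact sequence of \cite{Germain}, together with the explicit induced maps $\pi_{\ast0}$ and $\pi_{\ast1}$. Writing $K_0(A)=\Z^a\oplus G_0$, $K_1(A)=\Z^n\oplus G_1$, $K_0(B)=\Z^b\oplus H_0$, $K_1(B)=\Z^m\oplus H_1$ with $G_i,H_j$ finite, the starting point is Proposition \ref{KthObsProp}: a splitting forces $\Tor_1^{\Z}(K_\ast(A),K_\ast(B))=0$, forces $K_1(A)\otimes_{\Z}K_1(B)=0$, and forces the rank inequality in \ref{KthObsProp}(ii).

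The first reduction uses these three conditions. The Tor-vanishing forces the orders of $G_0,G_1$ to be prime to those of $H_0,H_1$, which is exactly the standing hypothesis in the tail of the theorem. The vanishing of $K_1(A)\otimes_{\Z}K_1(B)$ forces $n=0$ or $m=0$; by the symmetry in $A$ and $B$ I take $m=0$, and then $\Z^n\otimes_{\Z}H_1\cong H_1^n=0$ gives $n=0$ or $H_1=0$. I would next dispose of the case $n>0$: here $H_0=H_1=0$, inspecting the formula for $K_1(A\otimes B)$ forces $b\le 1$, and surjectivity of multiplication by $[1_B]$ on $K_1$ then forces $[1_B]$ to be a generator, landing in situation (i). This leaves $n=0$, where the problem becomes symmetric.

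With $n=0$ the decisive data are the ranks $a,b$ together with the orders of $[1_A]$ and $[1_B]$. The rank inequality gives $ab\le a+b$ (strict if either unit has infinite order), so $\min(a,b)\le 1$ unless $a=b=2$; the latter is eliminated because then $\pi_{\ast0}$ vanishes on the torsion-free part of $K_0$ and so cannot be surjective onto $\Z^{ab}=\Z^4$. Assuming without loss of generality $a\le 1$, I would first clear the subcases in which $[1_A]$ or $[1_B]$ has finite order: tracking where $\pi_{\ast0}$ and $\pi_{\ast1}$ send the free and torsion summands shows each such subcase collapses to situation (i), or, when both $K_\ast$ are torsion, to situation (ii), where $L(A\otimes B)=(0,0,0)$ and there is no obstruction.

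The main obstacle is the remaining delicate case $a=1$, $b\ge 1$ with both $[1_A]$ and $[1_B]$ of infinite order, which produces (iii) and (iv). Here the torsion-free part of $K_0(A*_{\C}B)$ is $\Z^2/\langle(u,-w)\rangle\oplus\Z^{b-1}$, and the crux is that $\pi_{\ast0}$ restricted to $\Z^2/\langle(u,-w)\rangle$ is, up to sign, the map $(x,y)\mapsto wx+uy$, whose image is $\gcd(u,w)\Z$; it is surjective onto $\Z$ exactly when $\gcd(u,w)=1$, and is then automatically bijective on that piece. The careful bookkeeping lies in the interaction with the finite summands: for $b=1$ one gets surjectivity of multiplication by $w$ on $G_0,G_1$ and by $u$ on $H_0,H_1$, forcing the stated coprimalities and yielding (iii); for $b>1$ the formula $x\mapsto(wx,0,\dots,0)$ into $G_0^b$ (resp.\ $G_1^b$) forces $G_0=G_1=0$, so $K_\ast(A)=(\Z,0,u)$, while $u$ must still be prime to $w$ and to the orders of $H_0,H_1$, yielding (iv). Collecting the surviving possibilities gives exactly (i)--(iv). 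The only genuinely subtle point is keeping straight in this last case which coprimality conditions are \emph{forced}, as opposed to merely sufficient, since the theorem is a necessity statement and sufficiency is taken up later in the paper.
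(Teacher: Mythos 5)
Your proposal reproduces the paper's own argument essentially step for step: the same use of the K\"{u}nneth theorem and Germain's exact sequence, the same reduction via Proposition \ref{KthObsProp} (Tor-vanishing, $K_1(A)\otimes_{\Z}K_1(B)=0$, the rank inequality), the same elimination of $a=b=2$, the same split into the torsion-unit subcases, and the same treatment of the delicate case $a=1$, $b\geq1$ with both units of infinite order via $\Z^2/\langle(u,-w)\rangle$ and the map $(x,y)\mapsto wx+uy$. All of that is correct and matches the paper.

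The one step you omit is the extra $\Z$ summand in $K_1(A*_{\C}B)$. Germain's computation gives $K_1(A*_{\C}B)\cong K_1(A)\oplus K_1(B)\oplus\Z$ precisely when both $[1_A]$ and $[1_B]$ are torsion in $K_0$, and this summand is not canonically defined as a subgroup. Your surjectivity analysis of $\pi_{\ast1}$ in the subcases where a unit class has finite order (the ones feeding into situations (i) and (ii)) tacitly restricts $\pi_{\ast1}$ to $K_1(A)\oplus K_1(B)$; without controlling the extra $\Z$ you cannot conclude that the image of $\pi_{\ast1}$ equals the image of that restriction, so a priori the extra summand could rescue surjectivity in cases you are ruling out. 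The paper disposes of this with a separate lemma (the summand can be chosen to map into $\Tor_1^{\Z}(K_0(A),K_0(B))$, a torsion group, so a quotient of $\Z$ landing there splits only if it is zero) and the resulting corollary that any splitting forces this map to vanish. You should insert this reduction before equation-level bookkeeping on $K_1$; with it, your argument is complete and identical in substance to the paper's.
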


In all cases except (ii), $K_\ast(A*_{\C}B)$ is the same as $K_\ast(A\otimes B)$ and the maps $\pi_{\ast0}$
and $\pi_{\ast1}$ are isomorphisms.  In case (ii), we have 
$$L(A*_{\C}B)=((G_0/rG_0)\oplus(H_0/sH_0),G_1\oplus H_1,0)$$
i.e.\ the identity always has class 0.

Theorem \ref{Thm31} does not guarantee a splitting in these cases; there often is not.
For one thing, there may be more obstructions coming from ordered $K$-theory:

\begin{Ex}\label{Ex33}
(cf.\ Example \ref{Ex21})  Suppose $K_\ast(A)=K_\ast(B)=(\Z,0)$ but $[1_A]=m$, $[1_B]=n$ with $m,n>1$.  Then $K_\ast(A\otimes B)\cong(\Z,0)$ and 
$K_\ast(A*_{\C}B)\cong(\Z^2/\langle(m,-n)\rangle,0)$, so there is potentially a splitting on the group level.
But depending on the actual map $\pi_\ast$ there may not be one at the scaled ordered group level,
or even at the group level.

Suppose $n=m$.  Then $K_0(A*_{\C}B)\cong\Z\oplus\Z_n$, and the order unit in 
$K_0(A*_{\C}B)$ is $(n,0)$.  The order unit in 
$K_0(A\otimes B)$ is $n^2$.  The map $\pi_\ast:K_0(A*_{\C}B)\to K_0(A\otimes B)$ is
multiplication by $n$, which is not surjective, so there can be no cross section.  More generally,
in $n$ and $m$ are not relatively prime, and $d$ is their greatest common divisor and
$p$ is their least common multiple, then $K_0(A*_{\C}B)\cong\Z\oplus\Z_d$ and the order
unit in $K_0(A*_{\C}B)$ is $(p,0)$.  The order unit in 
$K_0(A\otimes B)$ is $mn$; $\pi_\ast$ is multiplication by $\frac{mn}{p}=d$ and is not surjective.

In these cases there is not a splitting at the group level.  But suppose $m$ and $n$ are relatively
prime.  Then there is a splitting at the group level ($\pi_\ast$ is an isomorphism).  
But the positive cone in $K_0(A\otimes B)$ can be larger than the positive cone in the unital free product
$K_0(A*_{\C}B)$, for example if $A=\M_m$, $B=\M_n$; in this case the positive cone in
$K_0(A\otimes B)\cong\Z$ is the usual one, but it is shown in \cite{RordamVFree} that (at least if $m$
or $n$ is prime) the positive cone in $K_0(A*_{\C}B)$ is the subsemigroup of $\Z$ generated by $m$ and $n$.
\end{Ex}

\begin{Ex}\label{ExMxO}
The situation can be nicer with Kirchberg algebras: the unital free product
$M_2(O_\infty)*_{\C}M_3(O_\infty)$ contains a unital copy of $\M_6$.  Let $\{e_{ij}:1\leq i,j\leq3\}$
be the standard matrix units in the $M_3(O_\infty)$.  There are mutually orthogonal subprojections 
$f_1,f_2,f_3,g_1,g_2$ of $e_{11}$ in $M_3(O_\infty)$ adding up to $e_{11}$ such that $f_1,f_2,f_3$ are equivalent to
$e_{11}$ and $g_1$ and $g_2$ are equivalent (take $[f_k]=[e_{11}]$ and $[g_k]=-[e_{11}]$
for all $k$ in $K_0(O_\infty)\cong\Z$).  Then 
$$f_1+f_2+f_3\sim 1_{M_3(O_\infty)}\ .$$
We have that
$1_{M_3(O_\infty)}=1_{M_2(O_\infty)}$ since the free product is unital, and $1_{M_2(O_\infty)}$
is the sum of two equivalent projections in the $M_2(O_\infty)$, so $f_1+f_2+f_3$ is the sum of
two equivalent projections $r_1,r_2$.  Set $p_{11}=r_1+g_1$ and $p_{12}=r_2+g_2$; then
$e_{11}=p_{11}+p_{12}$, and $p_{11}\sim p_{12}$.  For $j=2,3$ set $p_{j1}=e_{j1}p_{11}e_{1j}$
and $p_{j2}=e_{j1}p_{12}e_{1j}$; then 
$$\{p_{jk}:1\leq j\leq3,1\leq k\leq2\}$$
are six mutually
orthogonal equivalent projections in $M_2(O_\infty)*_{\C}M_3(O_\infty)$ adding up to the identity.

This argument generalizes to show that if $m$ and $n$ are relatively prime, then 
$M_m(O_\infty)*_{\C}M_n(O_\infty)$ contains a unital copy of $\M_{mn}$.  Thus there is no
$K$-theoretic obstruction to a splitting in this case.
\end{Ex}

\begin{Ex}
If $A$ or $B$ does not have finitely generated $K$-theory, the situation can be much more
complicated.  For example, if $A$ is the CAR algebra, there does not appear to be any $K$-theoretic
obstruction to a cross section for the quotient map from $A*_{\C}A$ to $A\otimes A$, 
but it is questionable that there is a splitting in this case.
\end{Ex}

One unresolved question is: if there is a splitting for the quotient map from $A*_{\C}B$ to
$A\otimes B$, is there necessarily a unital splitting (and is a splitting even necessarily unital)?
It follows from Theorem \ref{Thm31} and the comment afterward that in the UCT case with finitely
generated $K$-theory, any splitting must necessarily at least send the identity of $A\otimes B$
to a projection in $A*_{\C}B$ whose $K_0$-class is the same as the $K_0$-class of the identity.

\section{$K$-Theoretic Obstructions, General Free Product Case}\label{Sec4}

We can do an essentially identical analysis of obstructions to splitting of the quotient map
from $A*B$ to $A\otimes B$.  But this case can also be handled more easily: if there is a
splitting for this map, there is also a splitting for the quotient map from $A*_{\C}B$ to $A\otimes B$,
so the restrictions in this case are more severe.  However, it is easily seen that in all the
cases in Theorem \ref{Thm31} where a splitting is possible, there is actually a splitting (at
the $K$-theory level) of the quotient map from $A*B$ to $A*_{\C}B$.  So the $K$-theoretic
restrictions are identical in the two cases.  (Restrictions from {\em ordered} $K$-theory
may be more severe in the full free product case; cf.\ Example \ref{Ex21}.)

The quotient map from $A*B$ to $A*_{\C}B$ does not split in general.  There are $K$-theoretic
obstructions in many cases; for example, a quotient map between finite abelian groups
generally does not split.

Ordered $K$-theory also implies nonsplitting of the quotient map from $A*B$ to $A*_{\C}B$
in some cases even when there is a splitting at the $K$-theory level:

\begin{Ex}\label{Ex4}
The positive cone of $K_0(\C^2*\C^2)\cong\Z^2\oplus\Z^2=\Z^4$ is the set of 4-tuples with nonnegative 
entries, and the scale consists of points with each entry 0 or 1 (not all such points; in fact, it can be shown that the scale only consists of $(1,1,0,0)$, $(0,0,1,1)$ and the 4-tuples with at most one 1).  
The positive cone and scale of 
$K_0(\C^2*_{\C}\C^2)\cong\Z^4/\langle(1,1,-1,-1)\rangle$ are the images of the ones in $\Z^4$.
If there is a cross section $\sigma:\C^2*_{\C}\C^2\to\C^2*\C^2$ for the quotient map, $\sigma_\ast$
must send $[(1,0,0,0)]$ to an element of the scale in $\Z^4$ which is in the equivalence class;
the only such element is $(1,0,0,0)$ (in fact, this is the only element of the equivalence class
in the positive cone).  Similarly, we must have $\sigma_\ast([(0,1,0,0)])=(0,1,0,0)$,
etc.  But there is no such homomorphism since then $0=\sigma_\ast(0)=\sigma_\ast([(1,1,-1,-1)])
=(1,1,-1,-1)\neq0$.  Thus there can be no cross section.
\end{Ex}

\section{Lifting Commutation Relations}\label{Sec5}

This section contains the main results of this paper.

\paragraph{}
\begin{Def}
Let $A$ and $B$ be unital C*-algebras.  We say $A$ {\em absorbs $B$ smoothly} if there is an
isomorphism from $A$ to $A\otimes B$ which is homotopic to the embedding map 
$\bar\iota_A:a\mapsto a\otimes 1_B$ from $A$ to $A\otimes B$.
\end{Def}

If $B$ is smoothly self-absorbing, i.e.\ smoothly absorbs itself, then any $A$ that absorbs $B$
absorbs $B$ smoothly.  (It is an interesting and perhaps difficult question whether smoothly
self-absorbing is equivalent to strongly self-absorbing.)

Here are some examples of smooth absorption.  These are easy consequences of known (sometimes
deep) results, but have apparently not been explicitly written in the literature:
\begin{enumerate}
\item[(i)]  $O_2$ smoothly absorbs any separable simple unital nuclear C*-algebra since (a) it absorbs any
such algebra and (b) any unital endomorphism of $O_2$ is homotopic to the identity map by
\cite[2.1]{Cuntz} since the unitary group of $O_2$ is connected.
\item[(ii)]  $O_\infty$ is smoothly self-absorbing by the same argument (cf.\ \cite[8.2.3]{Rordam}); thus any $O_\infty$-absorbing
C*-algebra (e.g.\ any direct sum of Kirchberg algebras) absorbs $O_\infty$ smoothly.
\item[(iii)]  The Jiang-Su algebra $Z$ is smoothly self-absorbing by the results of \cite{DadarlatW}.
Thus any $Z$-stable C*-algebra absorbs $Z$ smoothly.
\end{enumerate}

\paragraph{}
\begin{Thm}\label{Thm40}
Let $A$ and $B$ be separable unital C*-algebras.  If $A$ is semiprojective and absorbs $B$
smoothly, then the quotient map from $A*B$ to $A\otimes B$ splits, and the quotient map from
$A*_{\C}B$ to $A\otimes B$ splits unitally.
\end{Thm}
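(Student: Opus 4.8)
The hypotheses give two ingredients: $A$ is semiprojective, and there is an isomorphism $\theta\colon A\to A\otimes B$ homotopic to the canonical embedding $\bar\iota_A\colon a\mapsto a\otimes 1_B$. The plan is to use semiprojectivity of $A$ to lift a suitable map into $A*B$ along the tower of soft tensor products, and to use the homotopy to guarantee that the lift can be pushed all the way in. First I would set $D=A*B$ (or $A*_{\C}B$ in the unital case) and let $J_n$ be the kernel of the quotient $D\to A\circledast_{1/n}B$, so that $(J_n)$ is an increasing sequence of closed ideals with $[\cup_n J_n]^-=J$ and $D/J\cong A\otimes B$, exactly as in the proof of Proposition \ref{SemProp}. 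The quotient map $\pi\colon D\to A\otimes B$ is the one we want to split.

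The key idea is to feed the isomorphism $\theta\colon A\xrightarrow{\ \cong\ }A\otimes B$ into the lifting machinery rather than the identity of $A\otimes B$. Since $A$ is semiprojective, the composite $A\xrightarrow{\theta}A\otimes B=D/J$ partially lifts to a $*$-homomorphism $\rho\colon A\to D/J_n$ for some large $n$; in the unital case $\rho$ can be taken unital. Composing $\rho$ with the quotient $D/J_n\to A\circledast_{1/n}B\to A\otimes B$ recovers $\theta$, so modulo $J$ the map $\rho$ realizes the isomorphism $\theta$. Because $\theta$ is homotopic to $\bar\iota_A$, I would then use the homotopy together with the semiprojectivity of $A$ (semiprojective algebras are in particular what is needed to transport homotopic maps up the tower, after increasing $n$) to replace $\rho$ by a partial lift of the embedding $\bar\iota_A$ itself. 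The point of switching to $\bar\iota_A$ is that $\bar\iota_A$ factors as $A\to A\otimes B$ with the $A$-leg being the identity, which is precisely the restriction of $\pi$ to (a copy of) $A$; this is what lets the lifted copy of $A$ together with the evident copy of $B$ assemble into a genuine splitting of $\pi$.

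Concretely, once I have a partial lift $\tilde\rho\colon A\to D/J_n\to A\circledast_\epsilon B$ of $\bar\iota_A$ (for $\epsilon=1/n$ small), I would invert the isomorphism: since $\bar\iota_A$ is homotopic to the isomorphism $\theta$ and $A$ absorbs $B$ smoothly, $\tilde\rho$ is (up to the controlled $\epsilon$-commuting error) an isomorphism onto $A\circledast_\epsilon B$ modulo the residual ideal, and I can precompose with $\theta^{-1}$ to obtain a map $A\otimes B\cong A\to A\circledast_\epsilon B$. Feeding this through a splitting of $A\circledast_\epsilon B\to A\otimes B$, which exists for small $\epsilon$ when $A\otimes B\cong A$ is semiprojective (Proposition \ref{SemProp}), and then lifting across $D\to A\circledast_\epsilon B$, produces the desired $\sigma\colon A\otimes B\to D$ with $\pi\circ\sigma=\mathrm{id}$. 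In the unital case all maps are taken unital, yielding a unital splitting.

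**Main obstacle.**
The hard part will be the transport of the homotopy up the ideal tower: semiprojectivity gives a partial lift of a single homomorphism, but I need to lift a homotopy $t\mapsto\theta_t$ connecting $\bar\iota_A$ to $\theta$ compatibly, so that the lifted copy of $A$ genuinely matches the canonical $A$-leg of $\pi$ and the resulting $\sigma$ is an honest right inverse rather than merely a right inverse up to homotopy. Making this precise — and in particular ensuring the two copies ($A$ and $B$) end up $\epsilon$-commuting so the map factors through $A\circledast_\epsilon B$, while simultaneously keeping $\pi\circ\sigma$ exactly the identity — is the crux; I expect it to rely on the mapping-cylinder/approximate-lifting form of semiprojectivity together with the point-norm continuity noted in the Introduction.
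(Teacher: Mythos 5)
Your overall strategy---lift the isomorphism $\theta\colon A\to A\otimes B$ rather than the identity of $A\otimes B$, then compose with $\theta^{-1}$---is exactly the paper's, but there is a genuine gap in how you produce the lift, and it sits precisely at the step you flag as the ``main obstacle.''  Semiprojectivity of $A$ applied to the tower $J_n=\ker(A*B\to A\circledast_{1/n}B)$ only gives a partial lift $\rho\colon A\to A\circledast_{1/n}B$ of $\theta$; composing with $\theta^{-1}$ then splits $A\circledast_{1/n}B\to A\otimes B$, which is the already-known soft-tensor-product statement quoted in the introduction, not the theorem.  To split $A*B\to A\otimes B$ you must lift across the single quotient $A*B\to A\circledast_{1/n}B$, and semiprojectivity says nothing about lifting along one fixed quotient (only along the tail of an increasing union of ideals).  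Your proposed repairs do not close this: Proposition \ref{SemProp} requires \emph{both} $A$ and $B$ semiprojective, whereas $B$ is arbitrary here (and is genuinely non-semiprojective in the main application, Corollary \ref{KirchCor}(i)); the step ``replace $\rho$ by a partial lift of $\bar\iota_A$'' goes in the useless direction, since $\bar\iota_A$ already lifts to all of $A*B$ via the canonical inclusion $\iota_A$; and the claim that $\tilde\rho$ is ``an isomorphism onto $A\circledast_\epsilon B$ modulo the residual ideal'' is false---a map homotopic to an isomorphism need not be one, and $\bar\iota_A$ is far from surjective.

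The missing ingredient is the Homotopy Lifting Theorem for semiprojective C*-algebras \cite{BlackadarLifting}: if $A$ is semiprojective, then liftability of a homomorphism $A\to D/J$ to $D$ is invariant under homotopy.  Granting that, the proof is two lines: $\bar\iota_A$ lifts to $\iota_A\colon A\to A*B$ (unitally to $A*_{\C}B$), hence the homotopic map $\theta$ lifts to some $\psi$ with $\pi\circ\psi=\theta$, and $\sigma=\psi\circ\theta^{-1}$ is the desired (unital) splitting.  That homotopy-invariance of liftability is a theorem in its own right and is not recoverable from the soft-tensor-product tower argument you outline; without citing or proving it, the argument does not go through.
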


\begin{proof}
Let $\phi$ be an isomorphism from $A$ to $A\otimes B$ which is homotopic to $\bar\iota_A$.  Since
$\bar\iota_A$ lifts to a homomorphism ($\iota_A$) from $A$ to $A*B$ and a unital homomorphism
from $A$ to $A*_{\C}B$, there is a lift $\psi$ of $\phi$ to $A*B$ and a unital lift $\tilde\psi$
of $\phi$ to $A*_{\C}B$ by the Homotopy Lifting Theorem \cite{BlackadarLifting}.  Then
$\psi\circ\phi^{-1}$ and $\tilde\psi\circ\phi^{-1}$ are the desired splittings.
\end{proof}

We give a lower-tech alternate argument which covers many of the same cases and also potentially
more situations.  In the following, let $A$ and $P$ be a separable unital C*-algebra with the following properties:
\begin{enumerate}
\item[(i)]    $P$ is isomorphic to $P^{\otimes\infty}=\displaystyle{\bigotimes_{k=1}^\infty P}$, and hence to 
$P^{\otimes n}=\displaystyle{\bigotimes_{k=1}^n P}$ for any $n$.
\item[(ii)]  $A$ is semiprojective and absorbs $P$, i.e.\ $A\otimes P\cong A$.
\end{enumerate}

Some examples of such $A$ and $P$ are:
\begin{enumerate}
\item[(a)]  $A=O_2$ or a finite direct sum of copies of $O_2$, $P=B^\infty$ for any separable simple
unital nuclear $B$, e.g.\ $P=O_2$, $O_\infty$, the Jiang-Su algebra $Z$, or the CAR algebra.
\item[(b)]  $A$ a finite direct sum of semiprojective Kirchberg algebras, e.g.\ $O_\infty$, $P=O_\infty$
or $P=Z$.
\end{enumerate}
It is likely there are some other possibilities, although the conditions on $A$ and $P$ are rather restrictive.

\begin{Thm}\label{Thm52}
Let $A$ and $P$ be as above.
Let $Q$ be the full free product of $A$ and a sequence of copies of $P$, i.e.\ $Q$ is the universal C*-algebra
generated by a copy of $A$ and a sequence of copies of $P$ with no relations.  The canonical quotient map 
$\pi:Q\to A\otimes P^{\otimes\infty}$ splits, i.e.\ there is a *-homomorphism $\sigma:A\otimes P^{\otimes\infty}\to Q$ with $\pi\circ\sigma=id$.
\end{Thm}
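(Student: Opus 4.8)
The plan is to apply the semiprojectivity of the target directly, in the spirit of the proof of Proposition~\ref{SemProp}, and then use the absorption $A\otimes P\cong A$ to upgrade the resulting partial lift into an honest splitting. Throughout write $T=A\otimes P^{\otimes\infty}$ for the target; since $P\cong P^{\otimes\infty}$ and $A\otimes P\cong A$ we have $T\cong A$, so $T$ is separable and semiprojective. Denote by $\iota\colon A\to Q$ and $\iota_k\colon P\to Q$ ($k\geq 1$) the canonical embeddings of the free factors of $Q$, and set $P^{(k)}:=\iota_k(P)$ for the $k$th free copy.

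First I would filter $\ker\pi$. For each $n$ let $I_n\subseteq Q$ be the closed two--sided ideal generated by the commutators $[\iota(A),\iota_k(P)]$ and $[\iota_j(P),\iota_k(P)]$ that involve only $A$ and the first $n$ copies $\iota_1(P),\dots,\iota_n(P)$. Then $(I_n)$ is increasing, $Q/I_n\cong Q_n:=(A\otimes P^{\otimes n})*P^{(n+1)}*P^{(n+2)}*\cdots$, and every commutator defining $\ker\pi$ lies in some $I_n$, so $[\cup_n I_n]^-=\ker\pi$ and $T=Q/\ker\pi$. Writing $q_n\colon Q_n\to T$ for the induced quotient, we have $\pi=q_n\circ p_n$ where $p_n\colon Q\to Q_n$. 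Since $T$ is semiprojective, the identity map $\mathrm{id}_T\colon T\to Q/[\cup_n I_n]^-$ lifts partially: for some $n$ there is a $*$-homomorphism $\psi\colon T\to Q_n$ with $q_n\circ\psi=\mathrm{id}_T$. By itself this only splits $q_n$; the task is to transport it back across $p_n$ to $Q$.

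This is where absorption enters and where I expect the main work to lie. Iterating $A\otimes P\cong A$ gives $A\otimes P^{\otimes n}\cong A$; fix such an isomorphism $\gamma$, and assemble (by the universal property of the free product) an isomorphism $\Gamma\colon Q_n\xrightarrow{\;\cong\;}Q$ that sends the commuting block $A\otimes P^{\otimes n}$ onto the single free copy $\iota(A)$ via $\iota\circ\gamma$ and sends $P^{(n+k)}$ onto $\iota_k(P)$. The crucial verification is that $\Gamma$ carries $\ker q_n$ \emph{exactly} onto $\ker\pi$: the ideal $\ker q_n$ is generated by the commutators of the block with the $P^{(n+k)}$ and of the $P^{(n+j)}$ among themselves, and because $\gamma$ is onto, the $\Gamma$--images of these generators generate precisely the commutation ideal defining $\pi$. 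Hence $\bar q:=q_n\circ\Gamma^{-1}\colon Q\to T$ is a surjection with $\ker\bar q=\ker\pi$, so it factors as $\bar q=\alpha\circ\pi$ for a unique automorphism $\alpha$ of $T$.

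It then remains to set $\sigma:=\Gamma\circ\psi\circ\alpha\colon T\to Q$ and to unwind the identities. Rewriting $\alpha\circ\pi=q_n\circ\Gamma^{-1}$ as $\pi\circ\Gamma=\alpha^{-1}\circ q_n$, one computes $\pi\circ\sigma=(\pi\circ\Gamma)\circ\psi\circ\alpha=\alpha^{-1}\circ q_n\circ\psi\circ\alpha=\alpha^{-1}\circ\alpha=\mathrm{id}_T$, which is the required splitting. The genuinely delicate point is the kernel--matching of the previous paragraph: one must check that the absorption isomorphism, which thoroughly mixes the coordinates of the block $A\otimes P^{\otimes n}$, nonetheless carries the commutation relations of $Q_n$ onto those of $Q$, so that $\alpha$ is an automorphism rather than a mere endomorphism of $T$. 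This is exactly where having infinitely many free copies of $P$ together with $P\cong P^{\otimes\infty}$ is used: the copies absorbed into the block at a finite stage can be ``relabelled'' back into the ambient algebra, which is what makes $Q_n\cong Q$ and lets the discrepancy be corrected by an automorphism instead of obstructing the lift.
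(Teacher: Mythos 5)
Your argument is correct and is essentially the paper's own proof: both realize $A\otimes P^{\otimes\infty}$ as the inductive limit of the quotients $Q_n=(A\otimes P^{\otimes n})*P*P*\cdots$ with surjective connecting maps, invoke semiprojectivity of $A\otimes P^{\otimes\infty}\cong A$ to lift $\mathrm{id}$ to some $Q_n$, and then transport the lift back to $Q$ by fixing an isomorphism $A\cong A\otimes P^{\otimes n}$ (your $\Gamma$ and the correcting automorphism $\alpha$ just make the paper's final sentence explicit). The only nitpick is that your ideals $I_n$ must be generated by the unit-identification relations as well as the commutators for $Q/I_n\cong Q_n$ to hold; this does not affect the argument.
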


\begin{proof}
For each $n$, let
$$Q_n=(A\otimes P^{\otimes n})*P*P*\cdots$$
which is the universal C*-algebra generated 
by a copy of $A$ and a sequence of copies of $P$ such that the copy of $A$ and the first $n$ copies 
of $P$ commute and have a common unit.
There is an obvious canonical quotient map $\pi_n$ from $Q$ onto $Q_n$ for all $n$ and a
quotient map $\pi_{n,m}:Q_n\to Q_m$ for $n<m$ satisfying $\pi_{n,p}=\pi_{m,p}\circ\pi_{n,m}$
for $n<m<p$.  Thus we have an inductive system
$(Q_n,\pi_{n,m})$ with surjective connecting maps, and 
$$\limind(Q_n,\pi_{n,m})\cong A\otimes P^{\otimes\infty}$$
where the infinite tensor product is regarded as the universal C*-algebra generated by a copy
of $A$ and a sequence
of copies of $P$ which commute and have a common unit, and the isomorphism with the inductive
limit is the canonical one.

By assumption, $A\otimes P^{\otimes\infty}$ is isomorphic to $A$ and is thus semiprojective.
By definition of semiprojectivity, there is a lifting $\sigma:A\otimes P^{\otimes\infty}\to Q_n$ of the identity map on $A\otimes P^{\otimes\infty}$,
for some $n$.  Thus we have a cross section

\[
\xymatrix @=15mm {
{(A\otimes P^{\otimes n})*P*P*\cdots} \ar@<-1.pt>[r]_{\pi} & {(A\otimes P^{\otimes n})\otimes P\otimes P\otimes\cdots} 
\ar@<-3.pt>[l]_{\sigma} }\]
and the result follows by fixing an isomorphism of $A$ with $A\otimes P^{\otimes n}$.
\end{proof}

\begin{Cor}
Let $P$ and $A$ be as in the theorem.
For any $n\geq1$, the quotient map from $A*P*\cdots*P$ ($n$ copies of $P$) to $A\otimes P^{\otimes n}$ splits.
\end{Cor}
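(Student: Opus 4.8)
The plan is to deduce the Corollary from Theorem \ref{Thm52} by transporting the splitting $\sigma_\infty\colon A\otimes P^{\otimes\infty}\to Q$ (where $Q=A*P*P*\cdots$) along a suitable surjection onto the finite free product $R_n:=A*P*\cdots*P$ ($n$ copies). The mechanism uses the self-absorption $P\cong P^{\otimes\infty}$ of condition (i) twice: once to build the surjection, and once to guarantee that the induced map on the tensor-product quotients is an isomorphism we may invert.

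First I would partition the index set of the infinitely many free copies of $P$ in $Q$ into $n$ infinite pieces $S_1,\dots,S_n$, and fix for each $i$ an identification of the $i$-th copy $P_i$ in $R_n$ with $\bigotimes_{j\in S_i}P$. By the universal property of the full free product, sending $A$ to $A\subseteq R_n$ by the canonical inclusion and sending the $j$-th free copy of $P$ (for $j\in S_i$) to the corresponding tensor factor of $P_i\hookrightarrow R_n$ assembles into a $*$-homomorphism $\Phi\colon Q\to R_n$. Since the image of $\Phi$ contains $A$ together with all tensor factors of every $P_i$, and these generate $R_n$, the map $\Phi$ is surjective.

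Next I would verify that $\Phi$ descends to the tensor-product quotients, producing a commuting square relating the canonical maps $\pi\colon Q\to A\otimes P^{\otimes\infty}$ and $\pi_{R_n}\colon R_n\to A\otimes P^{\otimes n}$. The induced map $\bar\Phi\colon A\otimes P^{\otimes\infty}\to A\otimes P^{\otimes n}$ is exactly the regrouping of the countably many commuting tensor factors into the $n$ infinite blocks $S_1,\dots,S_n$; using $P\cong P^{\otimes\infty}$ on each block (so that $\bigotimes_{j\in S_i}P\cong P_i\cong P$) together with functoriality of the tensor product, $\bar\Phi$ is an isomorphism. Checking $\pi_{R_n}\circ\Phi=\bar\Phi\circ\pi$ is then a routine verification on the generators $A$ and the copies of $P$.

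Finally, with $\sigma_\infty$ the splitting of Theorem \ref{Thm52}, so that $\pi\circ\sigma_\infty=\mathrm{id}$, I would set $\sigma:=\Phi\circ\sigma_\infty\circ\bar\Phi^{-1}\colon A\otimes P^{\otimes n}\to R_n$; the commuting square then gives $\pi_{R_n}\circ\sigma=\bar\Phi\circ\pi\circ\sigma_\infty\circ\bar\Phi^{-1}=\mathrm{id}$, which is the desired splitting. The step needing the most care is confirming that $\bar\Phi$ is a genuine isomorphism: this is precisely where property (i) of $P$ is indispensable, since without $P\cong P^{\otimes\infty}$ the $n$ finite copies of $P$ could not absorb the infinitely many free copies, and it is also where nuclearity of $P$ is used to ensure that the regrouping of tensor factors is functorial.
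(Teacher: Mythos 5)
Your argument is correct and is essentially the paper's proof: both transport the splitting $\sigma_\infty$ of Theorem \ref{Thm52} along a surjection from $Q$ onto $A*P*\cdots*P$ obtained by collapsing infinitely many free copies of $P$ into commuting tensor blocks and then invoking $P\cong P^{\otimes\infty}$ to identify each block with a single copy of $P$; the only difference is that the paper uses the partition $\{1\},\dots,\{n-1\},\{n,n+1,\dots\}$ (leaving the first $n-1$ copies untouched) where you use $n$ infinite blocks. One small quibble: the regrouping of (maximal) tensor factors is an isomorphism by the universal property alone, so nuclearity of $P$ is not what is needed there, and indeed it is not among the paper's hypotheses on $P$.
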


\begin{proof}
The quotient map $\pi$ from $Q$ to $A\otimes P^{\otimes\infty}$ factors through 
$$\rho:Q\to A*P*\cdots*P*P^{\otimes\infty}$$
($n-1$ copies of 
$P$), the universal C*-algebra
generated by a copy of $A$ and a sequence of copies of $P$ where the copies of $P$ for $k\geq n$ commute and have
a common unit.  If $\sigma:A\otimes P^{\otimes\infty}\cong A\otimes P^{\otimes(n-1)}\otimes P^{\otimes\infty}\to Q$ is the cross section from
the theorem, then $\rho\circ\sigma$ is the desired cross section once $P^{\otimes\infty}$ is identified with $P$.
\end{proof}

Since the quotient map from $Q$ to $A\otimes P^{\otimes\infty}$ factors through the infinite unital free product
of copies of $A$ and $P$, the theorem and corollary remain true if ``free product'' is replaced by
``unital free product.''  However, this does not directly give a unital splitting.  But we can simply repeat 
the argument replacing $Q$ by the unital infinite free product to obtain unital versions:

\begin{Thm}
Let $A$ and $P$ be as above.
Let $Q$ be the full unital free product of $A$ and a sequence of copies of $P$, i.e.\ $Q$ is the universal unital C*-algebra
generated by a copy of $A$ and a sequence of copies of $P$ all with a common unit but with no 
other relations.  The canonical quotient map 
$\pi:Q\to A\otimes P^{\otimes\infty}$ splits unitally, i.e.\ there is a unital *-homomorphism $\sigma:A\otimes P^{\otimes\infty}\to Q$ with $\pi\circ\sigma=id$.
\end{Thm}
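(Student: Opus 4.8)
The plan is to repeat the proof of Theorem \ref{Thm52} almost verbatim, replacing every full free product by a unital free product and carrying the common unit along at each stage. First I would set $Q$ to be the unital infinite free product of $A$ with a sequence of copies of $P$, and for each $n$ define
$$Q_n=(A\otimes P^{\otimes n})*_{\C}P*_{\C}P*_{\C}\cdots,$$
the universal unital C*-algebra generated by a copy of $A$ and a sequence of copies of $P$, all sharing the common unit, in which the copy of $A$ and the first $n$ copies of $P$ are required to commute (since everything already has a common unit, this turns those $n+1$ factors into the tensor product $A\otimes P^{\otimes n}$). As before there are canonical unital surjections $\pi_n\colon Q\to Q_n$ and connecting maps $\pi_{n,m}\colon Q_n\to Q_m$ for $n<m$ satisfying the usual compatibilities, giving an inductive system $(Q_n,\pi_{n,m})$ with surjective connecting maps whose limit is $\limind(Q_n,\pi_{n,m})\cong A\otimes P^{\otimes\infty}$, the isomorphism being the canonical one.

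To put this into the form demanded by the definition of semiprojectivity, I would take $D=Q$ and $J_n=\ker\pi_n$. These are closed ideals with $J_n\subseteq J_m$ for $n<m$ (since $\pi_m=\pi_{n,m}\circ\pi_n$), and $J=[\cup_n J_n]^-$ is precisely the kernel of $\pi\colon Q\to A\otimes P^{\otimes\infty}$, so that $D/J_n\cong Q_n$ and $D/J\cong A\otimes P^{\otimes\infty}$. Here $D$, $\pi$, and the identity map on $A\otimes P^{\otimes\infty}$ are all unital. Since $A\otimes P^{\otimes\infty}\cong A$ is semiprojective, the unital form of semiprojectivity recorded in Section \ref{Sec2} (for unital $A$ one may take $D$ and $\phi$ unital, and the partial lift unital) yields a \emph{unital} partial lift $\sigma\colon A\otimes P^{\otimes\infty}\to Q_n$ of the identity for some $n$; that is, the quotient map $Q_n\to A\otimes P^{\otimes\infty}$ (namely $D/J_n\to D/J$) composed with $\sigma$ is $\mathrm{id}$.

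Finally I would fix a $*$-isomorphism $A\cong A\otimes P^{\otimes n}$, obtained by applying $A\otimes P\cong A$ exactly $n$ times; such an isomorphism is automatically unital, since any isomorphism of unital C*-algebras preserves the unit. By functoriality of the unital free product it induces a unital isomorphism $Q\cong Q_n$ intertwining $\pi$ with the quotient map $Q_n\to A\otimes P^{\otimes\infty}$ under the corresponding identification $A\otimes P^{\otimes\infty}\cong(A\otimes P^{\otimes n})\otimes P^{\otimes\infty}$. Transporting $\sigma$ through this isomorphism produces the required unital cross section $A\otimes P^{\otimes\infty}\to Q$ for $\pi$.

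The one point needing care beyond transcribing the proof of Theorem \ref{Thm52} is unitality: I must invoke the unital refinement of semiprojectivity so that the partial lift $\sigma$ is unit-preserving, and I must check that the terminal identification $A\cong A\otimes P^{\otimes n}$ induces a unit-preserving isomorphism of the unital free products that intertwines the two quotient maps. I expect this bookkeeping to be the only real content; the semiprojectivity step itself is identical to the non-unital case and presents no additional obstacle.
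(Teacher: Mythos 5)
Your proposal is correct and follows exactly the route the paper intends: the paper itself states that the unital version is obtained by ``simply repeating the argument'' of Theorem \ref{Thm52} with the unital infinite free product in place of the full one, which is precisely what you do, and your explicit appeal to the unital refinement of semiprojectivity from Section \ref{Sec2} supplies the one bookkeeping point the paper leaves implicit.
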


\begin{Cor}
Let $P$ and $A$ be as in the theorem.
For any $n\geq1$, the quotient map from $A*_{\C}P*_{\C}\cdots*_{\C}P$ ($n$ copies of $P$) to $A\otimes P^{\otimes n}$ splits unitally.
\end{Cor}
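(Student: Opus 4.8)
The plan is to follow the proof of the preceding (non-unital) corollary essentially verbatim, replacing every full free product by the corresponding unital free product and drawing the splitting from the unital analogue of Theorem \ref{Thm52} rather than from Theorem \ref{Thm52} itself. Thus I would take $Q$ to be the full unital free product of $A$ with a sequence of copies of $P$ sharing a common unit, and let $\sigma:A\otimes P^{\otimes\infty}\to Q$ be the unital cross section for $\pi:Q\to A\otimes P^{\otimes\infty}$ furnished by that theorem.

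First I would observe that $\pi$ factors through the intermediate unital free product
$$\rho:Q\longrightarrow A*_{\C}P*_{\C}\cdots*_{\C}P*_{\C}P^{\otimes\infty}$$
with $n-1$ free copies of $P$, namely the universal unital C*-algebra generated by a copy of $A$ and a sequence of copies of $P$, all with a common unit, in which the copies indexed $k\geq n$ are additionally required to commute and hence assemble into one commuting $P^{\otimes\infty}$. Indeed $\pi$ is the composite of $\rho$ with the further unital quotient $c$ that forces $A$ and the first $n-1$ copies of $P$ to commute with everything; the target of $c$ is $A\otimes P^{\otimes(n-1)}\otimes P^{\otimes\infty}\cong A\otimes P^{\otimes\infty}$. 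I would then form $\rho\circ\sigma:A\otimes P^{\otimes\infty}\to A*_{\C}P*_{\C}\cdots*_{\C}P*_{\C}P^{\otimes\infty}$; since $\sigma$ splits $\pi$ unitally and $\pi=c\circ\rho$, this composite is a unital cross section for $c$. Finally I would absorb the tail: identifying the last free factor $P^{\otimes\infty}$ with $P$ turns the intermediate algebra into $A*_{\C}P*_{\C}\cdots*_{\C}P$ with $n$ copies of $P$ and turns $c$ into the quotient map $q$ onto $A\otimes P^{\otimes n}$, while on the source side the identifications $A\otimes P^{\otimes n}=A\otimes P^{\otimes(n-1)}\otimes P\cong A\otimes P^{\otimes(n-1)}\otimes P^{\otimes\infty}\cong A\otimes P^{\otimes\infty}$ (using $P\cong P^{\otimes\infty}$) match the source of $\rho\circ\sigma$ with $A\otimes P^{\otimes n}$. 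Under these unital isomorphisms $\rho\circ\sigma$ becomes the required unital splitting of $q$.

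The content here is pure bookkeeping and requires no new analytic input beyond the unital theorem, so I expect no genuine obstacle. The only point needing care is that the chain of identifications ($P^{\otimes\infty}\cong P$ on the tail and $P^{\otimes(n-1)}\otimes P^{\otimes\infty}\cong P^{\otimes\infty}$) be applied consistently on both the free-product side and the tensor-product side, and that unitality survive each step; this holds because $\sigma$ is unital, $\rho$ and $c$ are unital surjections, and every isomorphism invoked is unital.
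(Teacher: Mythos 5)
Your proposal is correct and is exactly the argument the paper intends: it repeats the proof of the non-unital corollary verbatim with unital free products throughout, factoring the unital $\pi$ through the intermediate quotient with $n-1$ free copies of $P$ and a commuting tail, composing with the unital cross section from the unital theorem, and identifying $P^{\otimes\infty}$ with $P$. The bookkeeping about consistent identifications and preservation of unitality is handled as the paper implicitly assumes.
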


\begin{Cor}
If $A$ and $P$ are as in the theorem, $B$ is a C*-algebra, $J$ a closed two-sided ideal of $B$ with
$B/J$ unital,
and $A_0$ and $P_0$ are commuting unital copies of $A$ and $P$ in $B/J$, and $A_0$ and $P_0$ lift
to copies of $A$ and $P$ in $B$, then $A_0$ and $P_0$ lift to commuting copies of $A$ 
and $P$ with a common unit in $B$.
If $B$ is unital,
and $A_0$ and $P_0$ lift
to unital copies of $A$ and $P$ in $B$, then $A_0$ and $P_0$ lift to commuting unital copies of $A$ 
and $P$ in $B$.
\end{Cor}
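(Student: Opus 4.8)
The plan is to realize this corollary as the concrete form of the ``lifting problem'' reformulation of a splitting described in the introduction, applied with the second algebra equal to $P$ and using the splittings furnished by the ($n=1$) case of the corollaries to Theorem~\ref{Thm52}. Write $q\colon B\to B/J$ for the quotient map. First I would package the data in $B/J$ into a single homomorphism: since $A_0$ and $P_0$ are commuting copies of $A$ and $P$ with a common unit, the corresponding maps $\bar\phi\colon A\to B/J$ and $\bar\psi\colon P\to B/J$ have commuting ranges sharing a unit, so (because all tensor products here are maximal) they assemble into a $*$-homomorphism $\bar\mu\colon A\otimes P\to B/J$ with $\bar\mu|_{A\otimes 1}=\bar\phi$ and $\bar\mu|_{1\otimes P}=\bar\psi$.

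Next I would use the separate lifts. The hypothesis provides $\phi\colon A\to B$ and $\psi\colon P\to B$ with $q\circ\phi=\bar\phi$ and $q\circ\psi=\bar\psi$, and by the universal property of the full free product these combine into a single $*$-homomorphism $\phi*\psi\colon A*P\to B$. A short check on generators shows $q\circ(\phi*\psi)=\bar\mu\circ\pi$, where $\pi\colon A*P\to A\otimes P$ is the canonical quotient map, since both sides restrict to $\bar\phi$ on the copy of $A$ and to $\bar\psi$ on the copy of $P$. Now let $\sigma\colon A\otimes P\to A*P$ be the splitting of $\pi$ supplied by the corollary to Theorem~\ref{Thm52}, so $\pi\circ\sigma=\mathrm{id}$, and set $\mu:=(\phi*\psi)\circ\sigma\colon A\otimes P\to B$. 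Then $q\circ\mu=\bar\mu\circ\pi\circ\sigma=\bar\mu$, so $\mu$ lifts $\bar\mu$. Restricting $\mu$ to $A\otimes 1$ and to $1\otimes P$ produces $\tilde\phi$ and $\tilde\psi$ with commuting ranges and common unit $\mu(1_{A\otimes P})$; these lift $\bar\phi$ and $\bar\psi$, and they are injective because their compositions with $q$ are the injective maps $\bar\phi,\bar\psi$. Thus they are the desired commuting lifts of $A_0$ and $P_0$ with a common unit.

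For the unital statement the argument is identical except that one works with unital maps throughout: when $B$ is unital and $A_0,P_0$ lift to unital copies, $\phi$ and $\psi$ are unital, they combine via the universal property of the unital free product into a unital $\phi*_{\C}\psi\colon A*_{\C}P\to B$, and one composes with the unital splitting $\sigma'\colon A\otimes P\to A*_{\C}P$ from the unital corollary. The resulting $\mu$ is unital, so the common unit of the two copies is $1_B$.

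I do not expect a deep obstacle here: all the real content sits in Theorem~\ref{Thm52} and its corollaries, and what remains is bookkeeping with universal properties. The points requiring care are (a) invoking maximality of the tensor product to obtain $\bar\mu$ from commuting copies, (b) verifying the intertwining identity $q\circ(\phi*\psi)=\bar\mu\circ\pi$ cleanly on generators, and (c) keeping track of units --- recording that in the non-unital case the common unit is the projection $\mu(1_{A\otimes P})$ rather than a unit of $B$, and selecting the unital free product together with the unital splitting in the unital case so that this projection becomes $1_B$.
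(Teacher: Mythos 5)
Your proof is correct and is exactly the argument the paper intends: the corollary is the lifting-problem reformulation of a splitting (spelled out in the Introduction) applied to the $n=1$ splittings of $A*P\to A\otimes P$ and $A*_{\C}P\to A\otimes P$ from the preceding corollaries, which is why the paper states it without proof. Your bookkeeping with $\bar\mu$, $\phi*\psi$, and the identity $q\circ(\phi*\psi)=\bar\mu\circ\pi$ is the right way to make that translation precise.
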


\begin{Cor}\label{KirchCor}
(i)  If $B$ is a separable simple unital nuclear C*-algebra, the canonical quotient map from
$O_2*B$ to $O_2\otimes B$ splits, and the quotient map from $O_2*_{\C}B$ to 
$O_2\otimes B$ splits unitally. 

\noindent
(ii) Let $A$ be a semiprojective Kirchberg algebra.  Then the canonical quotient map from
$A*O_\infty$ to $A\otimes O_\infty$ splits, and the quotient map from $A*_{\C}O_\infty$ to
$A\otimes O_\infty$ splits unitally. 

\noindent
(iii) Let $A$ be a $Z$-stable semiprojective C*-algebra.Then the canonical quotient map from
$A*Z$ to $A\otimes Z$ splits, and the quotient map from $A*_{\C}Z$ to
$A\otimes Z$ splits unitally.
\end{Cor}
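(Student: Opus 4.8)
The plan is to derive all three parts directly from Theorem \ref{Thm40}, whose hypotheses require only that the absorbing algebra be separable, unital, semiprojective, and absorb the other algebra smoothly. The one point to keep straight is which algebra plays the role of the semiprojective absorber: in (i) it is $O_2$ (with $B$ being absorbed), whereas in (ii) and (iii) it is $A$ itself (absorbing $O_\infty$, respectively $Z$). Once the correct matching is fixed, each part reduces to verifying two hypotheses and quoting Theorem \ref{Thm40}, which yields simultaneously the splitting of $A*B\to A\otimes B$ and the unital splitting of $A*_{\C}B\to A\otimes B$.

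For (i) I would take the role of ``$A$'' in Theorem \ref{Thm40} to be $O_2$ and the role of ``$B$'' to be the given separable simple unital nuclear algebra $B$. Semiprojectivity of $O_2$ is recorded in \ref{Sec25}, and the first of the smooth-absorption examples states precisely that $O_2$ absorbs any separable simple unital nuclear C*-algebra smoothly (using $O_2\otimes B\cong O_2$ together with connectedness of the unitary group of $O_2$, which makes every unital endomorphism homotopic to the identity). Theorem \ref{Thm40} then applies verbatim.

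For (ii) and (iii) the algebra $A$ is the absorber and is semiprojective by hypothesis, so only smooth absorption must be checked. In (ii), a Kirchberg algebra satisfies $A\otimes O_\infty\cong A$ by \ref{Kirch}(iv), and since $O_\infty$ is smoothly self-absorbing (second smooth-absorption example), any $O_\infty$-absorbing algebra---in particular $A$---absorbs $O_\infty$ smoothly; Theorem \ref{Thm40} applied to the pair $(A,O_\infty)$ finishes the case. In (iii), $Z$-stability gives $A\otimes Z\cong A$, and since $Z$ is smoothly self-absorbing (third example, via Dadarlat--Winter), $A$ absorbs $Z$ smoothly; Theorem \ref{Thm40} applied to $(A,Z)$ concludes.

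I do not expect a genuine obstacle inside the corollary itself: the real content has already been packaged into Theorem \ref{Thm40} and the smooth-absorption examples, which in turn rest on deep inputs (Kirchberg's $O_2$-absorption, self-absorption of $O_\infty$, and the Dadarlat--Winter theorem for $Z$). The point most worth flagging is that parts (ii) and (iii) could alternatively be obtained from the corollaries to Theorem \ref{Thm52} by taking a single tensor factor ($n=1$), since there the pairs $(A,O_\infty)$ and $(A,Z)$ satisfy the required conditions $P\cong P^{\otimes\infty}$ and $A\otimes P\cong A$; however, that route does not directly deliver (i), because it would force the single algebra $B$ to satisfy $B\cong B^{\otimes\infty}$, which is not assumed. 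This is why I would route all three parts uniformly through Theorem \ref{Thm40}.
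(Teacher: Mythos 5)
Your proposal is correct and matches the paper's intended derivation: the corollary is stated without a separate proof precisely because it follows from Theorem \ref{Thm40} combined with the listed smooth-absorption examples, with $O_2$ as the semiprojective absorber in (i) and $A$ as the absorber in (ii) and (iii). Your remark that (ii) and (iii) also follow from the corollaries to Theorem \ref{Thm52} with $n=1$, while (i) does not (since a general $B$ need not satisfy $B\cong B^{\otimes\infty}$), is an accurate reading of why the author provides both routes.
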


Note that the only known unital $Z$-stable semiprojective C*-algebras are finite direct sums of
semiprojective Kirchberg algebras.

\section{How Explicit and General?}

The quotient maps from $O_2*O_n$ to $O_2\otimes O_n\cong O_2$ and from
 $O_n*O_\infty$
to $O_n\otimes O_\infty\cong O_n$ for any $n$, $2\leq n\leq\infty$, split.  It would be very useful to have
an explicit formula or description of a cross section in these cases (note that no uniqueness for the
splitting should be expected); but there may not be any such explicit
description, just as there is no explicit isomorphism between $O_2\otimes O_2$ and $O_2$.

There are still unresolved cases where both $A$ and $B$ are semiprojective Kirchberg algebras,
and some other interesting cases:
\begin{enumerate}
\item[(i)]  $O_m\otimes O_n$ where $m-1$ and $n-1$ are relatively prime and $>1$. 
Note that if $m-1$ and $n-1$ are not relatively prime, then there is no splitting for 
$O_m\otimes O_n$ (ruled out by Theorem \ref{Thm31}).
\item[(ii)]  $M_m(O_\infty)\otimes M_n(O_\infty)$, with $m,n>1$ and $m$ and $n$ relatively prime.
The case $m=1$ or $n=1$ is covered by the theorem, and the case $m$ and $n$ not relatively
prime is ruled out by Theorem \ref{Thm31}.
\item[(iii)]  Other cases potentially allowed by Theorem \ref{Thm31}(ii)--(iv).
\item[(iv)]  $O_2\otimes B$ with $B$ separable unital nuclear
but not simple, even the case $O_2\otimes\C^2$.  Note that the case
$O_2\otimes\M_2$ is covered by the theorem.
\item[(v)]  $O_\infty\otimes B$, where $B$ is separable unital nuclear but not purely infinite.
Even the cases $O_\infty\otimes\C^2$ and $O_\infty\otimes\M_2$ are open.
\end{enumerate}

The case where $A$ and $B$ are Kirchberg algebras which are not semiprojective is also open.
There are still severe $K$-theoretical obstructions to a splitting in this case, but there are some
instances not ruled out, e.g.\ if $A$ or $B$ is $O_\infty$, if $K_0(A)=K_0(B)=\Q$ or the dyadic rationals
($[1_A]$ or $[1_B]$ not 0) and $K_1(A)=K_1(B)=0$, or if the $K$-groups of $A$ and $B$ are in 
a case allowed in Theorem \ref{Thm31}(ii)--(iv) where the $G_i$ and $H_j$ are not required to
be finite, but just torsion with relatively prime exponents.
The question of existence of a splitting
would not seem to have any essential dependence on semiprojectivity (even though the present
proof techniques use semiprojectivity).  For example, in the UCT case one can write $A$ and $B$ as inductive
limits of semiprojective Kirchberg algebras $A_n$ and $B_n$ (cf.\ \cite{Rordam}, \cite{Enders}) and try to make splittings for
$A_n*B_n\to A_n\otimes B_n$ approximately compatible using approximate unitary equivalence
theorems for embeddings of Kirchberg algebras.

Cases such as $Z\otimes Z$, $Z\otimes\C^2$, $Z\otimes\M_2$, or $M_{2^\infty}\otimes M_{2^\infty}$, $M_{2^\infty}\otimes Z$, and $M_{2^\infty}\otimes\M_2$ where $M_{2^\infty}$ is the CAR algebra, are also open.
(A UHF algebra of infinite type is smoothly self-absorbing but not semiprojective, as is $Z$.)
Note that there is a $K$-theoretic obstruction to a splitting for $M_{2^\infty}\otimes\C^2$, but not the others.
\bibliography{o2splitref}
\bibliographystyle{alpha}

\end{document}